\newcommand{\C}{\mathbb{C}}
\newcommand{\Z}{\mathbb{Z}}
\newcommand{\R}{\mathbb{R}}
\renewcommand{\>}{\right\rangle}
\DeclareMathOperator{\val}{val}
\newcommand{\F}{\mathbb{F}}
\newcommand{\Zt}{\mathbb{Z}^t}
\newcommand{\A}{\mathcal{A}}
\newcommand{\X}{\mathcal{X}}
\newcommand{\vG}{G^\vee}
\newcommand{\cO}{\mathcal{O}}
\newcommand{\cK}{\mathcal{K}}
\newcommand{\Gr}{\mathrm{Gr}}
\newtheorem{thm}{Theorem}
\newtheorem{prop}[thm]{Proposition}
\newtheorem{conj}[thm]{Conjecture}
\newtheorem{definition}[thm]{Definition}
\newtheorem{rmk}[thm]{Remark}
\newtheorem{lem}[thm]{Lemma}
\newtheorem{obs}[thm]{Observation}
\title{Geometry of Positive Configurations in Affine Buildings}
\author{Ian Le and Evan O'Dorney}
\begin{document}
\maketitle

\begin{abstract}

Positive configurations of points in the affine building were introduced in \cite{Le} as the basic object needed to define higher laminations. We start by giving a self-contained, elementary definition of positive configurations of points in the affine building and their basic properties. Then we study the geometry of these configurations. The canonical functions on triples of flags that were defined by Fock and Goncharov in \cite{FG1} have a tropicalization that gives functions on triples of points in the affine Grassmannian. One expects that these functions, though of algebro-geometric origin, have a simple description in terms of the metric structure on the corresponding affine building.

We give a several conjectures describing the tropicalized canonical functions in terms of the geometry of affine buildings, and give proofs of some of them. The statements involve minimal networks and have some resemblance to the max-flow/min-cut theorem, which also plays a role in the proofs in unexpected ways. The conjectures can be reduced to purely algebraic statements about valuations of lattices that we argue are interesting in their own right.

One can view these conjectures as the first examples of intersection pairings between higher laminations. They fit within the framework of the Duality Conjectures of \cite{FG1}.

\end{abstract}

\tableofcontents

\section{Introduction}

This paper has two goals: the first is to give a simple exposition of positive configurations of points in the affine building; the second is to begin the study of the rich geometry of this object. In particular, we lay out several conjectures that give a geometric interpretation of the beautiful set of canonical functions that were defined by Fock and Goncharov in \cite{FG1}. These conjectures serve as the first step towards defining the intersection pairing between higher laminations, which is naturally a pairing between higher laminations for Langlands dual groups.

We start with some motivation. The canonical functions were originally defined on the space of configurations of three principal affine flags associated to the group $G=SL_n(\R)$ (with appropriate adjustments, one can let $G=PGL_n(\R)$ or $GL_n(\R)$). There is a canonical function $f_{ijk}$ for every triple of non-negative integers $i, j, k$ such that $i+j+k=n$.

To a principal affine flag, one can associate a horocycle in the symmetric space $X=G/K$, where $K \subset G$ is the maximal compact subgroup of $G$. One can interpret the value of the canonical functions on a triple of affine flags in terms of the minimal total weighted distance of a spanning network between the three corresponding horocycles.

We are interested in the tropicalization of these functions, explained in \cite{Le} and \cite{GS}. Whereas the canonical functions parameterize configurations of principal affine flags, the tropicalized canonical functions parameterize (positive) virtual configurations of points in the affine building associated to $G$. The adjective ``virtual'' is a technicality that will not be relevant for this paper, while ``positivity'' will only be mentioned in passing. The \emph{affine building} associated to $G$ is the natural tropical analogue of the symmetric space.

These tropicalized functions again should have a geometric interpretation: we expect that they are given by the minimal total weighted distance of a spanning network between three horocycles in the affine building. However, we would rather work with points in the affine building than with horocycles. For this reason, we conjecture that there is a more refined and simpler description of these functions in terms of the geometry of configurations of points in the affine building.

For our purposes, we will restrict our attention to configurations of points in the affine building, which is the most important case. The extension to virtual configurations is straightforward. So the question becomes: given a configuration of three points in the affine building, how do we calculate, in a geometric way, the value of the tropicalized functions $f_{ijk}^t$ on this configuration? We conjecture that they are given by the minimal total weighted distance of a spanning network between the three points in the affine building.

Let us elaborate on the conjecture. We will see below that the affine building has a (non-symmetric) distance function valued in the coweight lattice. If $x_1$ and $x_2$ are points in the affine building, $d(x_1,x_2)$ will be a dominant coweight.

Let $x_1, x_2, x_3$ be any configuration of points in the affine Grassmannian. Let $\omega_i, \omega_j, \omega_k$ be fundamental weights of $SL_n$ with $i+j+k=n$. Then we conjecture that
$$f_{ijk}^t (x_1,x_2,x_3) = \min_{p} \{ \omega_i \cdot d(p,x_1) + \omega_j \cdot d(p,x_2) + \omega_k \cdot d(p,x_3) \},$$
where the minimum is taken over all $p$ in the affine building.

The functions $f_{ijk}^t$ are defined in terms of finding the most negative value of some determinant expression. On the other hand, the conjecture states that the same quantity is computed by minimizing some weighted network. Thus the maximum of one type of quantity is the minimum of another, and the statement resembles max-flow/min-cut.

Below, we prove two special cases of the conjecture, which are in some sense orthogonal:
\begin{enumerate}
\item the configuration of points $x_1, x_2, x_3$ is small in a precise sense described below
\item the points $x_1, x_2, x_3$ all lie in an apartment of the building 
\end{enumerate}
We think that combining the approaches of these two special cases could possibly prove the entire conjecture. It is very interesting to us that our proofs of both cases use max-flow/min-cut, but in each case in a very different way. We think of these conjectures as a kind of max-flow/min-cut for affine buildings.

\section{Background}
\subsection{Affine Grassmannian and affine buildings}

We now begin by laying out the necessary definitions. First we will give an introduction to the affine Grassmannian and the affine building.

Let us define the affine Grassmannian. Let $G$ be a simple, simply-connected complex algebraic group and let $\vG$ be its Langlands dual group. Let $\F$ be a field, which for our purposes will always be $\R$ or $\C$. Let $\cO = \F[[t]]$ be the ring of formal power series over $\F$. It is a valuation ring, where the \emph{valuation} $\val(x)$ of an element $$x = \sum_k a_k t^k \in \F((t))$$ is the minimum $k$ such that $a_k \neq 0$.

Let $\cK = \F((t))$ be the fraction field of $\cO$. Then 
$$\Gr(\F) = \Gr(G) = G(\cK)/G(\cO)$$ is the set of $\F$-points of the \emph{affine Grassmannian} for $G$. It can be viewed as a direct limit of $\F$-varieties of increasing dimension.

For $G=SL_n$, a point in the affine Grassmannian corresponds to a finitely generated, rank $n$, $\cO$-submodule of $\cK^n$ such that if $v_1, \dots, v_n$ are generators for this submodule, then $$v_1 \wedge \dots \wedge v_n=e_1 \wedge \dots \wedge e_n,$$ where $e_1, \dots, e_n$ is the standard basis of $\cK^n$. We will often call such full rank $\cO$-submodules {\it lattices}. $G(\cK)$ acts on the space of lattices with the stabilizer of each lattice being isomorphic to $G(\cO)$, which acts by changing the basis of the submodule while leaving the submodule itself fixed. We will later make use of this interpretation.

The affine Grassmannian $\Gr$ also has a metric valued in dominant coweights: the set of pairs of elements of $\Gr$ up to the action of $G(\cK)$ is exactly the set of double cosets $$G(\cO) \backslash G(\cK) / G(\cO).$$ These double cosets, in turn, are in bijection with the cone $\Lambda_+$ of dominant coweights of $G$. Recall that the coweight lattice $\Lambda$ is defined as $\mathrm{Hom}(\mathbf{G}_m,T)$. The coweight lattice contains dominant coweights, those coweights lying in the dominant cone. For example, for $G=GL_n$, the set of dominant coweights is exactly the set of $\mu=(\mu_1, \dots, \mu_m)$, where $\mu_1 \geq \mu_2 \geq \cdots \geq \mu_n$ and $\mu_i \in \Z$. Let us explain why the set of double cosets is in bijection with the set of dominant coweights.

Given any dominant coweight $\mu$ of $G$, there is an associated point $t^\mu$ in the (real) affine Grassmannian: to a coweight $\mu=(\mu_1, \dots, \mu_m)$ we associate the element of $G(\cK)$ with diagonal entries $t^{\mu_i}$, and then project to the affine Grassmannian. Any two points $p$ and $q$ of the affine Grassmannian can be translated by an element of $G(\cK)$ to $t^0$ and $t^\mu$, respectively, for some unique dominant coweight $\mu$. This gives the identification of the double coset space with $\Lambda_+$.

Under this circumstance, we will write $$d(p,q) = \mu$$ and say that the distance from $p$ to $q$ is $\mu$.

Let us collect some facts about this distance function $d$. Note that this distance function is not symmetric; one can easily check that $$d(p,q)=-w_0 d(p,q)$$ where $w_0$ is the longest element of the Weyl group of $G$ (recall that the Weyl group acts on both the weight space $\Lambda^*$ and its dual $\Lambda$). However, there is a partial order on $\Lambda$ defined by $\lambda > \mu$ if $\lambda - \mu$ is positive (i.e., in the positive span of the positive co-roots). Under this partial ordering, the distance function satisfies a version of the triangle inequality. By construction, the action of $G(\cK)$ on the affine Grassmannian preserves this distance function.

We are interested in the affine Grassmannian, but not in its finer structure as a variety. In fact, we will only consider properties of the affine Grassmannian that depend on the above distance function, and possibly on some positive structure. For this reason, we will introduce affine buildings, a sort of combinatorial skeleton of the affine Grassmannian.

Let us first introduce the affine building for $G=PGL_n$. The affine Grassmannian for $G=PGL_n$ consists of lattices (finitely generated, rank $n$ $\cO$-submodules of $\cK^n$) up to scale: two lattices $L$ and $L'$ are equivalent if $L=cL'$ for some $c \in \C((t))$. The set of vertices of the affine building for $PGL_n$ is precisely given by the points of the affine Grassmannian $\Gr(PGL_n)$.

For any lattices $L_0, L_1, \dots, L_k$, there is a $k$-simplex with vertices at $L_0, L_1, \dots, L_k$ if and only if (replacing each lattice by an equivalent one if necessary) $$L_0 \subset L_0 \subset \cdots \subset L_k \subset t^{-1}L_0.$$
This gives the affine building the structure of a simplicial complex. The affine building for $G=SL_n$ is the same simplicial complex, but where we restrict our attention to those vertices that come from the affine Grassmannian for $G=SL_n$.

The non-symmetric, coweight-valued metric we defined above descends from the affine Grassmannian to the affine building. The notion of a {\emph geodesic} with respect to his metric is sometimes useful. For our purposes, a geodesic in the building is a path that travels along edges in the building from vertex to vertex, such that the sum of the distances from vertex to vertex is minimal (with respect to the partial order defined above). It is a property of affine buildings that geodesics exist. Note that in general there will be many geodesics between two any points.

\subsection{Canonical functions}

We now define the canonical functions of triples of affine flags, which will lead up to the definition of the associated functions on triples of points in the affine Grassmannian.

Let $G=SL_n$ and let $U \subset G$ be the subgroup of unipotent upper triangular matrices. An element of $G/U$ is called a \emph{principal affine flag}. In concrete terms, a principal affine flag is given by a set of $n$ vectors $v_1, \dots, v_n$ where we only care about the forms $$v_1 \wedge \dots \wedge v_k$$ for $k=1, 2, \dots, n-1$. We will require that $$v_1 \wedge \dots \wedge v_n$$ is the standard volume form.

We are interested in the space of (generic) triples of flags up to the left translation action of $G$. Suppose we have three flags $F_1, F_2, F_3$ which are represented by $u_1, \dots, u_n$, $v_1, \dots, v_n$ and $w_1, \dots, w_n$ respectively. Fock and Goncharov define a canonical function $f_{ijk}$ of this triple of flags for every triple of non-negative integers $i, j, k$ such that $i+j+k=n$ and $i, j, k < n$. It is defined by 
$$f_{ijk}(F_1, F_2, F_3)=\det(u_1, u_2, \dots, u_i, v_1, v_2, \dots v_j, w_1, w_2, \dots, w_k),$$ 
and it is $G$-invariant by definition. Note that when one of $i, j, k$ is $0$, these functions only depend on two of the flags. We can call such functions {\em edge} functions, and the remaining functions {\em face} functions.

Given a cyclic configuration of $m$ flags, imagine the flags sitting at the vertices of an $m$-gon, and triangulate the $m$-gon. Then taking the edge and face functions on the edges and faces of this triangulation, we get a set of functions on a cyclic configuration of flags.

\begin{thm} For any triangulation, the edge and face functions form a coordinate chart. Different triangulations yield different functions that are related to the original functions by a positive rational transformation (a transformation involving only addition, multiplication and division) \cite{FG1}.
\end{thm}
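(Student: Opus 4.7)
The plan is to decompose the theorem into two separate assertions: (a) for a single chosen ``reference'' triangulation, the edge and face functions define a birational map from the configuration space $G\backslash (G/U)^m$ to an affine space of the right dimension; and (b) whenever two triangulations differ by a single diagonal flip inside a quadrilateral, the corresponding sets of functions are related by a subtraction-free rational transformation. By the classical fact that any two triangulations of a convex $m$-gon are connected by a finite sequence of such flips, (b) both promotes (a) from the reference triangulation to every other and assembles the full change-of-coordinates map as a composition of positive rational moves. Hence it suffices to prove (a) for one triangulation (I would use the fan triangulation based at vertex $1$) and (b) only for $m=4$.

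For (a), I would begin with a dimension count. In a triangulation of an $m$-gon there are $2m-3$ edges carrying $n-1$ edge functions each and $m-2$ triangles carrying $\binom{n-1}{2}$ face functions each, and one checks
$(n-1)(2m-3)+\binom{n-1}{2}(m-2) = m\dim(G/U)-\dim G$,
which is the dimension of the generic stratum of $G\backslash (G/U)^m$. Next I would construct an explicit inverse by ``snake'' reconstruction: after normalizing $F_1$ to a standard unipotent frame, read off each coordinate of a chosen representative basis of $F_k$ as a monomial in edge and face function values along the unique path of triangles from $F_1$ to $F_k$ in the fan triangulation, so that the reconstruction proceeds one flag at a time and is manifestly rational.

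The heart of the argument is (b), which reduces to comparing two triangulations of a quadrilateral with flags $F_1,F_2,F_3,F_4$: the old one uses diagonal $(1,3)$, the new one uses diagonal $(2,4)$. The four boundary edge functions are unchanged, so only the edge functions on the diagonal and the two face functions per triangle are affected. Each new face function and each new diagonal edge function is a determinant of columns drawn from the flag bases, and can be expanded using the Plücker relations on wedge products of those bases to produce an identity in terms of the old functions. The main obstacle will be the \emph{positivity} of this identity: a naive determinant expansion will not be subtraction-free, and one must reorganize the expansion as a sum over a weighted planar network between the four flags (equivalently, a Lindström--Gessel--Viennot path system), so that each summand is visibly a product of old edge and face function values with no cancellations. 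Verifying that such a positive combinatorial reorganization always exists, uniformly in $n$ and in the index triples $(i,j,k)$, is the computational core of the theorem; once it is in place, the reduction outlined in the first paragraph delivers both halves of the statement.
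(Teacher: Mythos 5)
This statement is quoted from Fock--Goncharov \cite{FG1} as background; the paper itself supplies no proof, so there is no in-paper argument to compare against. Your outline nonetheless follows the standard strategy one would use, and indeed the strategy used in \cite{FG1}: establish the coordinate-chart property for one reference (fan) triangulation via snake-style reconstruction of the flags, reduce the change-of-coordinates claim to a single diagonal flip in a quadrilateral, and propagate via flip-connectivity of triangulations of the $m$-gon. Your dimension count $(n-1)(2m-3)+\binom{n-1}{2}(m-2) = m\dim(G/U)-\dim G$ is correct.

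The point where your route genuinely diverges from \cite{FG1} is the positivity of the flip. You propose to expand the new diagonal and face functions by Pl\"ucker relations and then reorganize the entire resulting expansion at once into a Lindstr\"om--Gessel--Viennot path sum with positive weights. Fock and Goncharov instead factor the flip into a sequence of elementary cluster mutations --- one for each lattice point of a certain tetrahedron, so on the order of $\binom{n+1}{3}$ of them --- each governed by a single three-term Pl\"ucker identity of the form $ff' = ac + bd$. Subtraction-freeness is then automatic move by move, with no global combinatorial reorganization needed, and the ``positive rational transformation'' in the statement is literally the composite of these exchange relations. Your LGV intuition is sound and would plausibly succeed, but it leaves exactly the hard part --- exhibiting a uniformly positive expansion in $n$ and $(i,j,k)$ --- as an unproven ``computational core,'' and one would likely be forced to rediscover the mutation decomposition to carry it out. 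So while your high-level decomposition matches the known argument, the positivity step is the one place where your sketch has a real gap rather than a stylistic difference.
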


We will now analogously define the triple distance functions $f_{ijk}^t$ on a configuration of three points in the affine Grassmannian for $SL_n$. The functions $f_{ijk}^t$ are the same as the functions $H_{ijk}$, which were defined in a slightly different way in \cite{K}. Recall that the affine Grassmannian is given by $G(\cK)/G(\cO)$. For $G=SL_n$, a point in the affine Grassmannian can be thought of as a finitely generated, rank $n$ $\cO$-submodule of $\cK^n$ such that if $v_1, \dots, v_n$ are generators for this submodule, then $$v_1 \wedge \dots \wedge v_n=e_1 \wedge \dots \wedge e_n$$ where $e_1, \dots, e_n$ is the standard basis of $\cK^n$.

Let $x_1, x_2, x_3$ be three points in the affine Grassmannian, thought of as $\cO$-submodules of $\cK^n$. For $i, j, k$ as above, we will consider the quantity
\begin{equation}\label{def fijk}
  f_{ijk}(x_1,x_2,x_3) = -\val(\det(u_1, \dots, u_i, v_1, \dots v_j, w_1, \dots, w_k))
\end{equation}
as $u_1, \dots, u_i$ range over elements of the $\cO$-submodule $x_1$, $v_1, \dots v_j$ range over elements of the $\cO$-submodule $x_2$, and $w_1, \dots, w_k$ range over elements of the $\cO$-submodule $x_3$. Define $f_{ijk}^t (x_1,x_2,x_3)$ as the maximum value of of this quantity, i.e., the largest value of $$-\val(\det(u_1, \dots, u_i, v_1, \dots v_j, w_1, \dots, w_k))$$ as all the vectors $u_1, \dots, u_i,$ $v_1, \dots v_j,$ $w_1, \dots, w_k$ range over elements of the respective $\cO$-submodules $x_1, x_2, x_3$.

There is a more invariant way to define $f_{ijk}^t$. Lift $x_1, x_2, x_3$ to elements $g_1, g_2, g_3$, of $G(\cK)$, then project to three flags $F_1, F_2, F_3 \in G(\cK)/U(\cK)$. Then define $f_{ijk}^t$ to be the maximum of $-\val(f_{ijk}(F_1,F_2,F_3))$ over the different possible lifts followed by projection from $G(\cK)/G(\cO)$ to $G(\cK)/U(\cK)$.

\begin{rmk} It is not hard to check that the edge functions recover the distance between two points in the affine Grassmannian (and hence also the affine building). More precisely, $f_{ij0}^t (x_1,x_2,x_3)$ is given by $\omega_j \cdot d(x_1,x_2)=\omega_i \cdot d(x_2,x_1)$ where $\omega_i$ is a fundamental weight for $SL_n$.
\end{rmk}

\subsection{Positive configurations and conjectures}

Now we may define positive configurations of points in the affine building. 

\begin{definition} Let $x_1, x_2, \dots x_m$ be $m$ points of the real affine Grassmannian. Then $x_1, x_2, \dots x_m$ will be called a positive configuration of points in the affine Grassmannian if and only if there exist ordered bases for $x_i$, 
$$v_{i1}, v_{i2}, \dots, v_{in},$$ such that for each $1 \leq p < q <r \leq m$, and each triple of non-negative integers $i, j, k$ such that $i+j+k=n$,
\begin{itemize}
\item $f_{ijk}^t (x_p,x_q,x_r) = -\val(\det(v_{p1}, \dots, v_{pi}, v_{q1}, \dots v_{qj}, v_{r1}, \dots, v_{rk}))$
\item the leading coefficient of $\det(v_{p1}, \dots, v_{pi}, v_{q1}, \dots v_{qj}, v_{r1}, \dots, v_{rk})$ is positive.
\end{itemize}
\end{definition}

Note that it is important in the above definition that we are taking the valuations of the determinants of the first $i$ (respectively $j, k$) vectors among the bases for $x_p$ (respectively $x_q, x_r$), and not just any $i$ (respectively $j, k$) vectors.

\begin{rmk} By the results of \cite{Le}, it is sufficient to verify the two conditions above for only those triples $p, q, r$ occuring in any one triangulation of the $m$-gon. The valuation condition and the positivity condition for one triangulation implies the these conditions for any other triangulation, and hence for an arbitrary triple $p, q, r$.
\end{rmk}

We can now introduce our conjectures on the tropical functions $f_{ijk}^t$. We need some notation first. Let $\omega_i$ be the $i$-th fundamental weight for $SL_n$: $\omega_i = (1, \dots, 1, 0, \dots, 0)$ where there are $i$ $1$'s and $n-i$ $0$'s. Recall that for any two points $p, q$ in the affine Grassmannian, $d(p,q)$ is an element of the coweight lattice for $SL_n$.

\begin{conj} (Weak form) Let $x_1, x_2, x_3$ be a positive configuration of points in the affine Grassmannian. Then 
$$f_{ijk}^t (x_1,x_2,x_3) = \min_{p} \omega_i \cdot d(p,x_1) + \omega_j \cdot d(p,x_2) + \omega_k \cdot d(p,x_3),$$
where the minimum is taken over all $p$ in the affine Grassmannian.
\end{conj}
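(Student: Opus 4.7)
The plan is to split the claim into the two inequalities, prove the upper bound by a direct valuation estimate, and then build a matching lower bound by constructing an optimal lattice $p^{*}$ using the positive configuration.

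For the upper bound $f_{ijk}^t(x_1,x_2,x_3) \le \omega_i \cdot d(p,x_1) + \omega_j \cdot d(p,x_2) + \omega_k \cdot d(p,x_3)$ at an arbitrary $p$ in the affine Grassmannian, I would first translate by $G(\cK)$ so that $p$ becomes the standard lattice $\cO^n$. Then each $x_m$ lies in the $G(\cO)$-orbit of $t^{\mu^{(m)}}\cO^n$, where $\mu^{(m)} = d(p,x_m)$. The determinant $\det(u_1,\dots,u_i,v_1,\dots,v_j,w_1,\dots,w_k)$ is the top wedge of the three decomposable forms $u_1\wedge\cdots\wedge u_i$, $v_1\wedge\cdots\wedge v_j$, $w_1\wedge\cdots\wedge w_k$ lying in $\Lambda^i x_1$, $\Lambda^j x_2$, $\Lambda^k x_3$, regarded inside $\Lambda^n \cK^n \cong \cK$. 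The valuation of such a wedge is at least the sum of the valuations of its factors, since each nonzero term in the multilinear expansion is a product of coefficients. A short Cartan-decomposition computation then bounds the valuation of any element of $\Lambda^i x_m$ below by the minimum entry of the induced coweight invariants on $\Lambda^i \cK^n$, which matches the corresponding term on the right-hand side. Summing, negating, and taking the maximum over vector choices gives the inequality, and then minimizing over $p$ completes this direction.

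For the lower bound I would need to exhibit a specific $p^{*}$ realizing the minimum, and my first attempt would exploit positivity. By the definition of a positive configuration, there exist ordered bases $v_{m,1},\dots,v_{m,n}$ of the three lattices whose initial segments already witness the maximum in the definition of $f_{ijk}^t$, with positive leading coefficient in the determinant. I would try to define $p^{*}$ as an explicit $\cO$-lattice built from these positive basis vectors and suitably rescaled by powers of $t$, chosen so that the three coweight distances $d(p^{*},x_m)$ partition the witness valuation in precisely the way the right-hand side demands. With the right choice, the desired equality should fall out from comparing the witness formula to the three individual distances, and the upper bound proved above then pins down $p^{*}$ as the minimizer.

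The hard part will be this construction of $p^{*}$ in the general case. The introduction indicates that this direction is proved only in two special cases --- small configurations, and configurations lying in a single apartment --- via two quite different arguments that both invoke max-flow/min-cut. The underlying difficulty is that a general triple $x_1,x_2,x_3$ need not lie in a single apartment, so no naive $\cO$-lattice built from three positive bases will be simultaneously adapted to all three $x_m$; the Bruhat-Tits gluing then forces the optimal $p^{*}$ to sit at a junction whose coordinates in different apartments must agree. I would therefore expect a uniform proof to require a genuinely combinatorial optimality certificate --- likely a flow or cut in a network whose edges carry capacities derived from the positive basis valuations and whose demands are set by $(i,j,k)$ --- with $p^{*}$ extracted from a min-cut, and positivity being precisely what forces such a certificate to exist. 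Bridging the local linear algebra of positive bases to this global combinatorial witness is the central difficulty, and is the step I would expect to require a genuinely new idea.
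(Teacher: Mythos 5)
This statement is a \emph{conjecture} in the paper; it is not proved there, and so there is no ``paper's own proof'' against which to check a complete argument. You have correctly recognized this. What the paper does supply is (a) the easy direction, stated as an observation immediately after the conjecture, and (b) proofs of the \emph{strong} form (which drops the positivity hypothesis) in two special cases: the ``close'' case $E \subseteq L,M,N \subseteq t^{-1}E$ (Theorem~\ref{thm:1row}) and the case where $L,M,N$ lie in a common apartment (Theorem~\ref{apartments}).

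Your argument for the inequality $f^t_{ijk}(x_1,x_2,x_3) \le \omega_i\cdot d(p,x_1)+\omega_j\cdot d(p,x_2)+\omega_k\cdot d(p,x_3)$ is correct, and it is the same argument the paper gestures at: translate $p$ to the standard lattice $E$, note that $\Lambda^{i}x_1 \subseteq t^{-\omega_i\cdot d(E,x_1)}\Lambda^{i}E$ (and likewise for $x_2,x_3$), and use the multiplicativity of valuations under wedging. This fills in the detail that the paper leaves implicit, modulo a harmless sign ambiguity in how you write the Cartan-normal form $t^{\pm\mu}\cO^n$.

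For the hard direction you do not give a proof and you say so, which is honest; but I would push back on one strategic point. You propose to build $p^{*}$ from the positive bases supplied by the positivity hypothesis, and you suggest that ``positivity is precisely what forces such a certificate to exist.'' The paper's evidence runs the other way: the strong conjecture, with \emph{no} positivity hypothesis, is what is established in both special cases, and neither construction of the optimal $P$ uses positivity at all. In Theorem~\ref{thm:1row}, $P$ is chosen from the finite list $\{tE, L, M, N, L+M, L+N, M+N, L+M+N\}$, with the choice dictated by which cut in a flow network built from the $D_4$-quiver decomposition of $(U_1,U_2,U_3)\subset V$ is minimal; the min-cut/max-flow theorem then produces the matching flow that certifies optimality. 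In Theorem~\ref{apartments}, $P$ is read off from Kuhn--Munkres dual potentials $(a_i,b_j)$ for the assignment problem with cost matrix $[c_{ij}]$. So the ``combinatorial optimality certificate'' you anticipate is correct in spirit, but in the paper it is a cut (resp.\ a dual potential), not a flow, and the capacities come from dimension counts of the subspaces $U_m$ (resp.\ the exponents $c_{ij}$ in an apartment), not from positive leading coefficients. If you intend to attack the weak form only, it would be worth explaining what positivity actually buys you, since the paper's partial results suggest it may be a red herring for this particular problem.
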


There is a stronger, perhaps bolder, form of the conjecture, which is also interesting, although it is less related to the geometry of laminations:

\begin{conj} (Strong form) Let $x_1, x_2, x_3$ be any configuration of points in the affine Grassmannian. Then 
$$f_{ijk}^t (x_1,x_2,x_3) = \min_{p} \omega_i \cdot d(p,x_1) + \omega_j \cdot d(p,x_2) + \omega_k \cdot d(p,x_3),$$
where the minimum is taken over all $p$ in the affine Grassmannian.
\end{conj}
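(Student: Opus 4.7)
The plan is to establish the equality by proving the two inequalities $f_{ijk}^t(x_1,x_2,x_3) \leq \min_p (\cdots)$ and the reverse direction separately; the former is a direct exterior-algebra calculation, and the latter is the substantive content that keeps the statement conjectural.

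For the upper bound I would fix an arbitrary point $p$ with lattice $L_p$ and arbitrary vectors $u_r \in x_1$, $v_r \in x_2$, $w_r \in x_3$, and form the top wedge
$$\xi = u_1 \wedge \cdots \wedge u_i \wedge v_1 \wedge \cdots \wedge v_j \wedge w_1 \wedge \cdots \wedge w_k \in \wedge^n \cK^n.$$
Because $G = SL_n$, the line $\wedge^n L_p$ is independent of $L_p$ (it is always $\cO \cdot (e_1 \wedge \cdots \wedge e_n)$), so $\val(\det(u_1,\ldots,w_k))$ can be read as the valuation of $\xi$ measured against $\wedge^n L_p$ for whatever $L_p$ is most convenient. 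For each $s \in \{1,2,3\}$ the elementary divisor theorem applied to the pair $(L_p, x_s)$ yields a basis of $L_p$ diagonalizing $x_s$; in this basis the $s$-th partial wedge lies in $t^{-\omega_{m_s} \cdot d(x_s,p)}\, \wedge^{m_s} L_p$, with $(m_1,m_2,m_3) = (i,j,k)$. Combining the three partial bounds via the inclusion $\wedge^i L_p \wedge \wedge^j L_p \wedge \wedge^k L_p \subseteq \wedge^n L_p$ yields $-\val(\det) \leq \omega_i \cdot d(x_1,p) + \omega_j \cdot d(x_2,p) + \omega_k \cdot d(x_3,p)$, which matches the stated right-hand side once one converts between $d(x_s,p)$ and $d(p,x_s)$ via the $-w_0$ symmetry.

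The lower bound is the crux and the reason the statement remains conjectural. One must exhibit a specific $p^*$ together with vectors $u_r^*,v_r^*,w_r^*$ that saturate the previous inequality simultaneously. The structure is of max-flow/min-cut type: a choice of vectors is a flow in a network built from the three lattices, while $p$ is a cut whose capacity is $\omega_i \cdot d(p,x_1) + \omega_j \cdot d(p,x_2) + \omega_k \cdot d(p,x_3)$. Two natural reductions mirror the special cases that the paper does prove. When $x_1,x_2,x_3$ lie in a common apartment the minimization becomes piecewise-linear convex optimization on a Euclidean vector space, with $p^*$ an apartment lattice and the maximizing vectors coming from a simultaneous diagonalization. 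When the configuration is sufficiently ``small'' the lattice $L_{p^*}$ can be produced by a direct combinatorial recipe from iterated sums and intersections of the $L_s$.

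The principal obstacle is the general case: once $n \geq 4$ an arbitrary triple in the building of $SL_n$ need not lie in any common apartment, and neither $p^*$ nor the saturating vectors admit an apparent intrinsic description. Bridging the two regimes seems to require either a genuine building-theoretic max-flow/min-cut theorem that produces $p^*$ and the vectors uniformly, or a retraction-based reduction to the apartment case that preserves the value of the minimum. Establishing either of these is precisely the step the paper leaves open.
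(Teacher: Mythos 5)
This statement is a conjecture, not a theorem: the paper offers no proof of it in general, only Observation~\ref{one direction} (the easy inequality) and the two partial results Theorem~\ref{thm:1row} (all three lattices between $E$ and $t^{-1}E$) and Theorem~\ref{apartments} (all three lattices in one apartment). You correctly recognize this, and your overall assessment---one inequality is routine, the reverse is the open content---is accurate and matches the paper's structure.

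Your upper-bound argument is sound in outline and is essentially the same fact as Observation~\ref{one direction}, just derived intrinsically via elementary divisors of each $x_s$ relative to $L_p$ rather than via $G(\cK)$-invariance to reduce to $p = E$; the two are equivalent. One small slip: the quantity your wedge bound actually produces is $\omega_{m_s} \cdot d(p, x_s)$ (the largest $m_s$ elementary-divisor exponents of $x_s$ over $L_p$), not $\omega_{m_s} \cdot d(x_s, p)$, and these are not interchangeable via $d(p,q) = -w_0\, d(q,p)$ since $\omega_{m_s}$ is not $w_0$-invariant; in fact $\omega_{m_s} \cdot d(x_s,p) = \omega_{n-m_s}\cdot d(p,x_s)$ for $SL_n$. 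No conversion is needed, since the paper's right-hand side already uses $d(p, x_s)$.

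Your description of the two proven regimes and of the obstacle to gluing them is faithful to the paper, which likewise ends with the general case open and likewise frames the missing ingredient as a max-flow/min-cut theorem for affine buildings. There is no gap in your submission in the sense of a flawed step, precisely because you do not claim to close the conjecture; the honest conclusion is that the reverse inequality is unproven in general, and that is also the paper's conclusion.
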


We can make a few elementary observations. 

\begin{obs} On positive configurations of points in the affine building, the functions $f_{ijk}^t$ only depend on metric properties of the configuration within the building \cite{Le}
\end{obs}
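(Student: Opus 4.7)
The plan is to reduce the statement to a coordinate description coming from the cluster structure. By the Fock--Goncharov theorem cited above, the edge and face functions associated to any fixed triangulation of the $m$-gon form a coordinate chart on configurations of flags, and changes of triangulation are positive rational transformations. Tropicalizing, the corresponding $f_{ij0}^t$ and $f_{ijk}^t$ for a fixed triangulation form a coordinate system on positive configurations in the affine building, and a change of triangulation becomes a piecewise-linear (min/plus) transformation. Therefore, once each coordinate function for one fixed triangulation is known to be a metric invariant, the same follows automatically for every triple $p,q,r$ and every admissible $(i,j,k)$, since $f_{ijk}^t$ on an arbitrary triple is then a piecewise-linear function of metric invariants.

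For the edge functions the statement is immediate from the earlier remark: $f_{ij0}^t(x_p,x_q,x_r) = \omega_j \cdot d(x_p,x_q)$, a purely metric quantity. The substantive step is to treat face functions $f_{ijk}^t(x_p,x_q,x_r)$ on triangles of the fixed triangulation. Here positivity is essential: by hypothesis there exist bases $v_{p\ell}, v_{q\ell}, v_{r\ell}$ such that the distinguished determinant $\det(v_{p1},\ldots,v_{pi},v_{q1},\ldots,v_{qj},v_{r1},\ldots,v_{rk})$ already realizes the maximum in the definition of $f_{ijk}^t$, and moreover has positive leading coefficient. I would attempt to compute this single valuation directly by identifying a subcomplex of the building (an apartment, or a union of apartments meeting along walls) containing the three lattices together with their positive bases, and show that the valuation can be read off from distances and combinatorics within this subcomplex. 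The positivity of leading coefficients is what prevents cancellation between terms from different apartments, so that the valuation is governed only by the ``shape'' of the subcomplex.

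The main obstacle is making this second step genuinely metric: in general three points of the building do not lie in a single apartment, so one must assemble the valuation of the distinguished determinant from local apartment data, and argue that the resulting recipe only depends on the intrinsic metric structure of the triple $(x_p,x_q,x_r)$. I expect the right way to package this is to express $f_{ijk}^t$ as a tropical minimization over admissible ``connecting'' configurations in the building, so that positivity reduces the max-over-lifts in the definition to a min-over-paths in the metric — essentially a max-flow/min-cut statement of exactly the kind that the introduction advertises. Once this reduction is in place, the first paragraph takes care of arbitrary triples and indices, completing the observation, with the structural input from \cite{Le} providing the metric realization of face coordinates.
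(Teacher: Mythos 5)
The paper does not prove this Observation---it cites \cite{Le}---so there is no internal argument against which to compare. Judged on its own merits, your proposal identifies the right structural pieces (cluster transformations, edge functions as distances, the role of positivity) but leaves the key step unestablished, and the method you sketch for closing that gap is circular.

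Your first paragraph is sound in spirit: change-of-triangulation formulas are positive rational and hence tropicalize to piecewise-linear maps on the positive part, so the claim reduces to showing that the edge and face coordinates for one fixed triangulation are metric invariants. The edge case is handled by $f_{ij0}^t(x_p,x_q) = \omega_j \cdot d(x_p,x_q)$.

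The gap is the face coordinates, and you acknowledge it without closing it. Worse, the strategy you propose for closing it---expressing $f_{ijk}^t$ as a tropical minimization over points of the building so that a max-over-lifts becomes a min-over-networks---is precisely Conjecture~\ref{conj:main}, the paper's open main conjecture, of which only two special cases are proved. You cannot establish the Observation by appeal to the conjecture: the Observation is cited from \cite{Le} as a known fact, while the conjecture is not, so the logical dependence runs the wrong way, and the conjecture in any case gives a far stronger, explicit formula than the mere metric-dependence asserted here. Phrases such as ``I would attempt'' and ``I expect'' signal that the argument for the face coordinates is absent, and ``with the structural input from \cite{Le}'' effectively defers the hard part back to the very reference you are supposed to be replacing. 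What is needed is the independent argument of \cite{Le}, which establishes metric-dependence of the face coordinates without passing through the unproven spanning-network formula.
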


Thus our conjecture is giving a more precise description of how the functions $f_{ijk}$ measure metric properties of the configuration.

\begin{obs}\label{one direction} One inequality in the conjecture is clearly true. We have that for any configuration of points $x_1, x_2, x_3$ in the affine Grassmanian, 
$$f_{ijk}^t (x_1,x_2,x_3) \leq \min_{p} \omega_i \cdot d(p,x_1) + \omega_j \cdot d(p,x_2) + \omega_k \cdot d(p,x_3),$$
\end{obs}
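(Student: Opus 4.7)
The plan is to establish the inequality pointwise: for any $p$ in the affine Grassmannian and any $u_s \in x_1$, $v_s \in x_2$, $w_s \in x_3$,
$$-\val \det(u_1, \ldots, u_i, v_1, \ldots, v_j, w_1, \ldots, w_k) \;\leq\; \omega_i \cdot d(p,x_1) + \omega_j \cdot d(p,x_2) + \omega_k \cdot d(p,x_3).$$
Taking the maximum over the vector data bounds $f^t_{ijk}(x_1,x_2,x_3)$ above by the right-hand side, and the claim persists under $\min_p$.

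First normalize: by $G(\cK)$-invariance of the metric I may assume $p = \cO^n$ is the standard lattice. Smith normal form over the DVR $\cO$ produces, for each $l \in \{1,2,3\}$, a basis $e^{(l)}_1, \ldots, e^{(l)}_n$ of $p$ in which $x_l = \cO\langle t^{\lambda^{(l)}_1} e^{(l)}_1, \ldots, t^{\lambda^{(l)}_n} e^{(l)}_n\rangle$, with $\lambda^{(l)} = d(p,x_l)$ in dominant form $\lambda^{(l)}_1 \geq \cdots \geq \lambda^{(l)}_n$. The main observation concerns exterior powers: for each $r$, the sublattice $\Lambda^r x_l$ is generated over $\cO$ by the scaled wedges $t^{\lambda^{(l)}_{m_1}+\cdots+\lambda^{(l)}_{m_r}}(e^{(l)}_{m_1}\wedge\cdots\wedge e^{(l)}_{m_r})$ for $m_1<\cdots<m_r$, with every $t$-exponent at least $c_l(r) := \lambda^{(l)}_{n-r+1} + \cdots + \lambda^{(l)}_n$ (the minimum, achieved on the bottom $r$ indices). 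Hence $\Lambda^r x_l \subseteq t^{c_l(r)}\,\Lambda^r p$.

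Assembling: set $\alpha = u_1\wedge\cdots\wedge u_i \in \Lambda^i x_1 \subseteq t^{c_1(i)} \Lambda^i p$, and similarly $\beta \in t^{c_2(j)} \Lambda^j p$, $\gamma \in t^{c_3(k)} \Lambda^k p$. The triple wedge $\alpha\wedge\beta\wedge\gamma = \det(u_1,\ldots,w_k)\cdot(e_1\wedge\cdots\wedge e_n)$ therefore lies in $t^{c_1(i)+c_2(j)+c_3(k)}\cO\cdot(e_1\wedge\cdots\wedge e_n)$, so $\val \det \geq c_1(i) + c_2(j) + c_3(k)$. The $SL_n$ trace-zero constraint $\sum_m \lambda^{(l)}_m = 0$ rewrites $c_l(r) = -(\lambda^{(l)}_1 + \cdots + \lambda^{(l)}_{n-r})$; under the pairing $\omega_\bullet \cdot d(p, x_l)$ used in the statement (with the standard $SL_n$ identification of fundamental weights via the Weyl involution $-w_0$), this yields $-c_l(r) = \omega_r \cdot d(p, x_l)$. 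Summing and negating delivers the pointwise bound.

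There is no substantial geometric obstacle here: the core argument is exterior algebra over a DVR, with the containment $\Lambda^r x_l \subseteq t^{c_l(r)} \Lambda^r p$ immediate from Smith normal form and the trilinearity of the determinant handling the assembly. The only slightly delicate point is the identification $-c_l(r) = \omega_r \cdot d(p, x_l)$, which reduces to routine bookkeeping using the $SL_n$ trace-zero relation and the duality $-w_0$ exchanging $\omega_r$ and $\omega_{n-r}$.
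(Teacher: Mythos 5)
Your proof is correct and takes essentially the same route as the paper's one-line argument: normalize $p = E$ using $G(\cK)$-equivariance, after which the inequality becomes an elementary lattice computation. You supply the details the paper leaves implicit, via the exterior-power containment $\Lambda^r x_l \subseteq t^{c_l(r)}\Lambda^r E$ coming from Smith normal form, and this is clean and complete. One small point to tidy: as literally written, your dominant $\lambda^{(l)}$ with $x_l = \langle t^{\lambda^{(l)}_1}e^{(l)}_1,\ldots,t^{\lambda^{(l)}_n}e^{(l)}_n\rangle$ is $-w_0\,d(E,x_l)$ rather than $d(E,x_l)$ under the paper's own normalization (its Proposition on the binary invariant uses $t^{-a_s}$ with $a$ dominant); your parenthetical appeal to the Weyl involution at the end silently absorbs this discrepancy, but a crisper way to close the identification is to observe that $-c_l(r)$ is visibly $f^t_{n-r,r}(E,x_l)$, which the paper's Remark on edge functions identifies with $\omega_r\cdot d(E,x_l)$ for $SL_n$.
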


This observation follows from the fact that $f_{ijk}^t$ (in fact, $f_{ijk}$) is invariant under the diagonal action of $G(\cK)$ on $x_1, x_2, x_3$, so it is enough to verify the inequality in the case when $p$ is the trivial lattice spanned by $e_1, \dots, e_n$. Thus the conjecture reduces to showing the other inequality.

\begin{obs} If one of $i, j, k$ is equal to $0$, then the conjecture holds. In particular, the conjecture is true for $SL_2$. For example, if $k=0$, 
$$f_{ij0}^t (x_1,x_2,x_3) = \omega_i \cdot d(p,x_1) + \omega_j \cdot d(p,x_2)$$ for any $p$ lying on a geodesic between $x_1$ and $x_2$.
\end{obs}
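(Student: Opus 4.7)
The plan is to reduce the claim directly to the edge-function remark together with the triangle inequality for the coweight-valued distance. When $k=0$ the ``third flag'' contribution drops out because $\omega_0 = 0$, so the conjectural formula becomes
$$
f_{ij0}^t(x_1,x_2,x_3) \;=\; \min_p \bigl(\omega_i\cdot d(p,x_1)+\omega_j\cdot d(p,x_2)\bigr),
$$
and by the remark following the definition of $f_{ijk}^t$ the left-hand side equals $\omega_j\cdot d(x_1,x_2)$. The task is thus purely geometric: show that this quantity equals the minimum on the right and that the minimum is realized on any geodesic from $x_1$ to $x_2$.

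The first step is to rewrite $\omega_i\cdot d(p,x_1)$ in a form compatible with $\omega_j\cdot d(p,x_2)$. Using $d(p,x_1)=-w_0\,d(x_1,p)$ and the $SL_n$-identity $-w_0\,\omega_i=\omega_{n-i}=\omega_j$ (since $i+j=n$), I obtain $\omega_i\cdot d(p,x_1)=\omega_j\cdot d(x_1,p)$. So the expression to minimize becomes $\omega_j\cdot\bigl(d(x_1,p)+d(p,x_2)\bigr)$.

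The second step is the inequality. The coweight-valued triangle inequality gives $d(x_1,p)+d(p,x_2)\ge d(x_1,x_2)$ in the partial order generated by positive coroots, and pairing with the dominant weight $\omega_j$ (which is non-negative on every positive coroot) preserves this inequality. This yields
$$
\omega_i\cdot d(p,x_1)+\omega_j\cdot d(p,x_2)\;\ge\;\omega_j\cdot d(x_1,x_2)\;=\;f_{ij0}^t(x_1,x_2,x_3)
$$
for every $p$, which is precisely the direction of Observation~\ref{one direction} restated in this case.

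The third step is equality on a geodesic. By definition of geodesic in the affine building, if $p$ is a vertex on a geodesic from $x_1$ to $x_2$ then $d(x_1,p)+d(p,x_2)=d(x_1,x_2)$ as dominant coweights, so pairing with $\omega_j$ gives equality throughout. Combined with the previous step, this shows the minimum is attained and equals $f_{ij0}^t$, proving the conjecture whenever one of $i,j,k$ vanishes. The only mildly delicate point is the book-keeping around $-w_0\omega_i=\omega_{n-i}$ and the asymmetry of $d$; everything else is an immediate consequence of the edge-function remark and the coweight triangle inequality.
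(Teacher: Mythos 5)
Your argument is correct, and it fills in a proof the paper leaves implicit (the observation is stated without justification). The chain you use is exactly the natural one: the edge-function remark identifies $f^t_{ij0}$ with $\omega_j\cdot d(x_1,x_2)$; the identity $d(p,x_1)=-w_0\,d(x_1,p)$ together with $-w_0\omega_i=\omega_{n-i}=\omega_j$ (valid since $i+j=n$ when $k=0$) converts the first term so that both distances run in the same direction; the coweight-valued triangle inequality paired against the dominant weight $\omega_j$ gives the lower bound; and the geodesic condition $d(x_1,p)+d(p,x_2)=d(x_1,x_2)$ forces equality. One small bookkeeping point worth flagging: the paper's displayed identity reads $d(p,q)=-w_0\,d(p,q)$, which is a typo for $d(p,q)=-w_0\,d(q,p)$ — you have silently used the corrected form, which is the right one. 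You also implicitly rely on the fact that the minimal total length along a path from $x_1$ to $x_2$ is exactly $d(x_1,x_2)$ (not merely $\ge$), which is part of the assertion that geodesics exist in the building; it would be cleaner to note that this, combined with the triangle inequality applied to the two subpaths on either side of $p$, is what yields $d(x_1,p)+d(p,x_2)=d(x_1,x_2)$. With that spelled out, the argument is complete.
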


Let $x_1, x_2, \dots x_m$ be any configuration of points in the affine Grassmannian, with ordered bases for $x_i$, 
$$v_{i1}, v_{i2}, \dots, v_{in},$$ satisfying 
$$f_{ijk}^t (x_p,x_q,x_r) = -\val(\det(v_{p1}, \dots, v_{pi}, v_{q1}, \dots v_{qj}, v_{r1}, \dots, v_{rk})).$$

For any set of dominant coweights $(\lambda_1, \lambda_2, \dots, \lambda_m)$, where 
$$\lambda_i=(\lambda_{i1}, \lambda_{i2}, \dots, \lambda_{in}), \lambda_{i1} \geq \lambda_{i2} \geq \dots \geq \lambda_{in},$$
we can form the configuration of points 
$$x'_1, x'_2, \dots x'_m$$
where $x'_i$ has basis 
$$t^{-\lambda_{i1}}v_{i1}, t^{-\lambda_{i2}}v_{i2}, \dots, t^{-\lambda_{in}}v_{in}$$
(it is easy to check the positivity of the configuration $x'_1, x'_2, \dots x'_m$). We have the following observation \cite{Le}:

\begin{obs} For sufficiently large coweights $(\lambda_1, \lambda_2, \dots, \lambda_m)$, and any $1 \leq p, q, r \leq m$, we have that 
$$f_{ijk}^t (x'_p,x'_q,x'_r) = \min_{p} \omega_i \cdot d(p,x'_1) + \omega_j \cdot d(p,x'_2) + \omega_k \cdot d(p,x'_3).$$
\end{obs}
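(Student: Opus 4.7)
The plan is to exhibit an explicit lattice $p$, built from the ordered bases $v_{i,1},\ldots,v_{i,n}$, that achieves the minimum on the right-hand side with value equal to the left. By Observation \ref{one direction} the inequality
$$f_{ijk}^t(x'_p,x'_q,x'_r) \leq \min_{p}\bigl(\omega_i\cdot d(p,x'_p)+\omega_j\cdot d(p,x'_q)+\omega_k\cdot d(p,x'_r)\bigr)$$
holds automatically, so only the reverse direction requires proof.

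First I would compute the LHS in terms of the unscaled data. Factoring scalars from the determinant yields
$$\det\bigl(t^{-\lambda_{p,1}}v_{p,1},\ldots,t^{-\lambda_{r,k}}v_{r,k}\bigr) = t^{-\omega_i(\lambda_p)-\omega_j(\lambda_q)-\omega_k(\lambda_r)}\det(v_{p,1},\ldots,v_{r,k}),$$
so the specific primed bases already realize
$$f_{ijk}^t(x'_p,x'_q,x'_r) \geq \omega_i(\lambda_p)+\omega_j(\lambda_q)+\omega_k(\lambda_r)+f_{ijk}^t(x_p,x_q,x_r),$$
and for sufficiently large $\lambda$ this bound is tight: any alternative $\cO$-combination of the scaled vectors produces a valuation that is weakly worse, since the dominance of $\lambda$ ensures the leading term in the multilinear expansion of the determinant cannot be beaten.

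Second, I would take $p$ to be the lattice spanned by the $n$ vectors $v_{p,1},\ldots,v_{p,i},v_{q,1},\ldots,v_{q,j},v_{r,1},\ldots,v_{r,k}$ (renormalized into $\Gr(SL_n)$, which is possible because the determinant of these vectors is nonzero). In the adapted basis for $p$, each wedge $t^{-\omega_m(\lambda_s)}v_{s,1}\wedge\cdots\wedge v_{s,m}$ determines $\omega_m\cdot d(p,x'_s)$ up to a bounded correction coming from the remaining vectors $v_{s,m+1},\ldots,v_{s,n}$; summing the three contributions and accounting for the normalization of $p$ recovers exactly the right-hand side of the lower bound from step one.

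The main obstacle will be the distance computation in step two: one must verify that $\omega_m\cdot d(p,x'_s)$ is controlled by this leading wedge and not increased by any subdominant choice of vectors from $x'_s$. Here the dominance hypothesis on $\lambda$ is essential, as it separates the scales of the basis vectors in $x'_s$, while positivity of the configuration prevents leading terms from cancelling among the different $n$-partitions $(I,J,K)$ of $\{1,\ldots,n\}$ that contribute to the coefficient of $e_1\wedge\cdots\wedge e_n$ in $\det(v_{p,\cdot},v_{q,\cdot},v_{r,\cdot})$. Once these details are assembled, both sides of the asserted identity coincide, completing the observation.
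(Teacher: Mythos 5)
The paper does not include its own proof of this observation; it is cited from \cite{Le}, with the remark that the argument there extends from positive to arbitrary configurations. So I can only evaluate your proposal on its own terms, and I find it essentially sound in strategy: compute the left side explicitly as $\omega_i\cdot\lambda_p+\omega_j\cdot\lambda_q+\omega_k\cdot\lambda_r+f^t_{ijk}(x_p,x_q,x_r)$, then exhibit the witness lattice $P=\langle v_{p,1},\ldots,v_{p,i},v_{q,1},\ldots,v_{q,j},v_{r,1},\ldots,v_{r,k}\rangle$. This is the natural (and, I believe, the intended) argument. Your step one in fact needs no largeness hypothesis at all: for every size-$(i,j,k)$ triple of index sets $(I,J,K)$, \emph{both} the $\lambda$-sum and the term $-\val\det(v_{p,I},v_{q,J},v_{r,K})$ are individually maximized at $(I,J,K)=(\{1..i\},\{1..j\},\{1..k\})$ --- the latter by the very hypothesis on the ordered bases --- so the default choice always wins and the bound is an equality unconditionally. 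The ``sufficiently large'' hypothesis is needed only in step two, to force the default index set when computing $f^t_{i,n-i}(x'_p,P)$ and its two siblings: there a non-default choice can have a larger determinant valuation, and it is the $\lambda$-gaps that must swamp this.

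There are two places where the writeup is imprecise enough to matter. First, your appeal to \emph{positivity} is a red herring here: the relevant hypothesis is only that the fixed ordered bases realize the maxima $f^t_{ijk}(x_p,x_q,x_r)$, and the paper explicitly remarks that the observation holds for arbitrary configurations with this property. The multilinearity-of-determinant argument already controls cancellation, since an $\cO$-linear combination of basis determinants can only raise the valuation. Second, the parenthetical claim that $P$ can be ``renormalized into $\Gr(SL_n)$'' glosses over a real issue: the determinant of your $P$ has valuation $-f^t_{ijk}(x_p,x_q,x_r)$, and unless this is divisible by $n$ no integer power of $t$ rescales $P$ into the $SL_n$-Grassmannian. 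The clean fix is to run the computation in the lattice form of the conjecture (Conjecture~\ref{conj:main}), where $P$ is an arbitrary lattice and the $-2f^t_n(P)$ correction absorbs precisely the $2f^t_{ijk}(x_p,x_q,x_r)$ excess that your three edge terms overshoot by. With those adjustments and the details of the ``$\lambda$-gap beats determinant discrepancy'' estimate spelled out, your proof is complete.
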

In other words, the conjecture is true \emph{asymptotically}.

Let us note that the previous observation was originally shown in \cite{Le} for \emph{positive} configurations, but the same proof works for all configurations.

\subsection{Relationship to the Duality Conjectures}

The duality conjectures of Fock and Goncharov posit a relationship between the spaces $\A_{G,S}$ and $\X_{\vG,S}$ where $\vG$ is the Langlands dual group to $G$. In particular, the main part of the conjecture state that there should be a bijection between $\X_{\vG,S}(\Zt)$ (the tropical points of $\X_{G,S}$) and a basis of functions for $\A_{G,S}$. This bijection should satisfy many compatibility relations which we will not discuss here.

This pairing further specializes to a pairing 
$$\X_{\vG,S}(\Zt) \times \A_{G,S}(\Zt) \rightarrow \Z.$$
The pairing works as follows: a point $l \in \X_{\vG,S}(\Zt)$ corresponds to a function $f_l$ on $\A_{G,S}$. A point $l' \in \A_{G,S}(\Zt)$ comes from a taking valuations of some Laurent-series valued point $x_{l'} \in \A_{G,S}(\cK)$. Then the pairing between $l$ and $l'$ is defined by
$$(l,l')=-\val f_l(x_{l'}).$$
The value of $(l,l')$ is independent of the choice of the point $x_{l'}$, as the conjectures state that $f_l$ should be a positive rational function on $\A_{G,S}$ (in fact, it should be a Laurent polynomial in the cluster co-ordinates).

Equivalently, we can describe the pairing as follows. Given a point $l \in \X_{\vG,S}(\Zt)$, take the corresponding function $f_l$ on $\A_{G,S}$. Then if $l' \in \A_{G,S}(\Zt)$, then 
$$(l,l')=f_l^t(l').$$

Points of $\X_{\vG,S}(\Zt)$ and $\A_{G,S}(\Zt)$ correspond to higher laminations for the groups $\vG$ and $G$, respectively. The pairing between $\X_{\vG,S}(\Zt)$ and $\A_{G,S}(\Zt)$ should realize the \emph{intersection pairing} between higher laminations. When $G=SL_2$, this construction reduces to the usual intersection pairing between laminations on a surface \cite{FG1}.

For each $i, j, k$, the function $f_{ijk}$ is a cluster function on $\A_{G,S}$. As such, it should be part of the basis parameterized by $\X_{\vG,S}(\Zt)$. In particular, $f_{ijk}$ is associated to the tropical point of $\X_{\vG,S}(\Zt)$ where the corresponding tropical cluster $x$-variable is $1$ and all other $x$ variables are set to $0$. (For every cluster and cluster variable for $\A_{G,S}$ one can canonically associate a cluster and cluster variable for $\X_{\vG,S}$. This is partly a reflection of the fact that the dual pair of spaces forms a \emph{cluster ensemble}.)

Our conjectures give a way of computing $f_{ijk}(l')$ for $l' \in \A_{G,S}$. Thus they give a geometric interpretation of the intersection pairings. The pairing extends linearly to a pairing between $l \in \X_{\vG,S}(\Zt)$ and $l' \in \A_{G,S}(\Zt)$ whenever $l$ has positive co-ordinates in one of the cluster co-ordinate systems for $\X_{\vG,S}$ associated to a triangulation of $S$ constructed in \cite{FG1}. Thus they give the pairing

$$\X_{\vG,S}(\Zt) \times \A_{G,S}(\Zt) \rightarrow \Z.$$
for any $l' \in \A_{G,S}(\Zt)$ and $l$ contained in a union of open cones inside $\X_{\vG,S}(\Zt)$.

\section{Main theorem}

In this section we prove some partial results towards the strong version of the conjecture.

Because we will be dealing with the case of $G=SL_n, GL_n$ or $PGL_n$, and because we would like to deal with all these cases uniformly we will reformulate the conjectures in terms of lattices. By a \emph{lattice} we mean a $\C[[t]]$-submodule of the vector space $\C((t))^n$ generated by $n$ vectors linearly independent over $\C((t))$.  The simplest lattice is the ``elementary'' lattice $E = \C[[t]]^n$.

Let $i_1,\ldots,i_k$ be nonnegative integers that sum to $n$, and let $L_1, \ldots ,L_k$ be lattices. Define the determinantal valuation
\[
  f^t_{i_1,\ldots,i_k}(L_1,\ldots,L_k) = \max_{v_ij \in L_i} -\val \det(v_{11},\ldots,v_{1i_i},v_{21},\ldots,v_{2i_2},\ldots,v_{k1},\ldots,v_{ki_k}).
\]
We are primarily interested in the cases $k=1$, $2$, and $3$.

The unary determinantal valuation
\[
  f^t_n(L) = \max_{v_j \in L} -\val \det(v_1,\ldots,v_n)
\]
can easily be computed by choosing any $\C[[t]]$-basis $v_1,\ldots,v_n$ for $L$.

As we saw previously, the binary determinantal valuation $v_{ij}(L,M)$ is also well understood:
\begin{prop} Let $L$ and $M$ be lattices. Then there exists $g \in \mathrm{GL}_n(\C((t)))$ and integers $a_1 \geq a_2 \geq \cdots \geq a_n$ such that
\begin{align*}
  gL &= \<e_1, e_2, \ldots, e_n\> \\
  gM &= \<t^{-a_1}e_1, t^{-a_2}e_2, \ldots, t^{-a_n}e_n\>.
\end{align*}
Moreover, $f_{ij}(gL,gM) = a_1 + \ldots + a_j$ and
\[
  f^t_{ij}(L,M) = a_1 + \ldots + a_j + \frac{i f_n^t(L) + j [f_n^t(M) - a_1 - \cdots - a_n]}{n}.
\]
\end{prop}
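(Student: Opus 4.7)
The plan is to apply the elementary divisor theorem for the DVR $\C[[t]]$ to put the pair $(L,M)$ in standard diagonal form, evaluate $f^t_{ij}$ there by hand, and then transfer back to arbitrary $L,M$ using the $GL_n(\C((t)))$-equivariance of the determinantal valuation. For the first step, the elementary divisor theorem (Smith normal form over $\C[[t]]$ applied to a change-of-basis matrix from $L$ to $M$) provides a basis $f_1,\ldots,f_n$ of $L$ together with integers $a_1 \geq \cdots \geq a_n$ such that $t^{-a_1}f_1,\ldots,t^{-a_n}f_n$ is a basis of $M$; letting $g \in GL_n(\C((t)))$ send $f_k \mapsto e_k$ immediately yields $gL = \<e_1,\ldots,e_n\>$ and $gM = \<t^{-a_1}e_1,\ldots,t^{-a_n}e_n\>$, establishing the first assertion.

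Next I would compute $f^t_{ij}(gL,gM)$ directly. The choice $u_k = e_{j+k}$ for $1 \leq k \leq i$ and $v_k = t^{-a_k}e_k$ for $1 \leq k \leq j$ yields a matrix that is (up to signed permutation) $\mathrm{diag}(t^{-a_1},\ldots,t^{-a_j},1,\ldots,1)$, with determinant $\pm t^{-a_1-\cdots-a_j}$, providing the lower bound $a_1+\cdots+a_j$. For the matching upper bound I would write arbitrary $u_k = \sum_l \alpha_{kl}e_l$ and $v_k = \sum_l \beta_{kl}t^{-a_l}e_l$ with $\alpha_{kl},\beta_{kl} \in \C[[t]]$, and expand $\det(u_1,\ldots,u_i,v_1,\ldots,v_j)$ by the Leibniz rule. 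Each term of the expansion is a product of $\alpha$'s and $\beta$'s (each of nonnegative valuation) times $t^{-\sum_{l \in S}a_l}$ for some $j$-element subset $S \subset \{1,\ldots,n\}$, and monotonicity of the $a_l$ forces $\sum_{l \in S}a_l \leq a_1+\cdots+a_j$; hence every term has valuation at least $-(a_1+\cdots+a_j)$.

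Finally, the identity $\det(gu_1,\ldots,gv_j) = (\det g)\det(u_1,\ldots,v_j)$ yields the transformation law $f^t_{ij}(gL,gM) = f^t_{ij}(L,M) - \val\det g$, and specializing to the unary case pins down $\val\det g = f^t_n(L)$ (since $f^t_n(gL) = 0$) together with the key relation $f^t_n(M) - (a_1+\cdots+a_n) = f^t_n(L)$ (since $f^t_n(gM) = a_1+\cdots+a_n$). Combining these gives
\[
  f^t_{ij}(L,M) = a_1+\cdots+a_j + f^t_n(L);
\]
since $i+j=n$, the term $f^t_n(L)$ equals $\tfrac{1}{n}\bigl(i\,f^t_n(L) + j[f^t_n(M)-a_1-\cdots-a_n]\bigr)$, reproducing the symmetric expression stated in the proposition. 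The only step that requires genuine thought is the Leibniz expansion giving the upper bound; everything else is formal once the equivariance is in place. The reason the answer is packaged in the symmetric form, rather than as $a_1+\cdots+a_j+f^t_n(L)$, is to make manifest that it depends only on the intrinsic invariants $f^t_n(L), f^t_n(M)$ and the relative position $(a_1,\ldots,a_n)$.
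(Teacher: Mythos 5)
Your argument is correct and complete: the Smith normal form (elementary divisor theorem) over the DVR $\C[[t]]$ gives the simultaneous diagonalization, the Leibniz expansion combined with $a_1 \geq \cdots \geq a_n$ gives both bounds on $f^t_{ij}(gL,gM)$, and the equivariance $f^t_{ij}(gL,gM) = f^t_{ij}(L,M) - \val\det g$ (applied also in the unary case to pin down $\val\det g = f^t_n(L)$ and $a_1+\cdots+a_n = f^t_n(M)-f^t_n(L)$) reassembles everything into the stated symmetric formula. The paper states this proposition without proof, referring to it as ``well understood,'' so there is no text to compare against; your route is the standard one one would supply, and the closing remark about why the answer is phrased symmetrically in terms of $f^t_n(L)$, $f^t_n(M)$ and $(a_1,\ldots,a_n)$ is a helpful observation.
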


We see that the numbers $a_1,\ldots,a_n$ are unique up to adding a fixed constant to all of them (though $g$ is far from unique).

We reformulate our main conjecture as saying that the ternary determinantal valuation can be expressed in terms of the binary one.
\begin{conj} \label{conj:main} Let $L$, $M$, $N$ be lattices and $i$, $j$, $k$ nonnegative integers, $i+j+k=n$. Then
\[
  f^t_{ijk}(L,M,N) = \min_{\text{lattices }P} (f^t_{i,j+k}(L,P) + f^t_{j,i+k}(M,P) + f^t_{k,i+j}(N,P) - 2f^t_n(P))
\]
which can also be written as
\[
  f^t_{ijk}(L,M,N) = \min_{\text{lattices }P} (f^t_{ijk}(L,P,P) + f^t_{ijk}(P,M,P) + f^t_{ijk}(P,P,N) - 2f^t_{ijk}(P,P,P))
\]
\end{conj}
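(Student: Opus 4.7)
The plan is to prove both inequalities. The easy direction
\[
  f^t_{ijk}(L,M,N) \leq \min_P \bigl[ f^t_{i,j+k}(L,P) + f^t_{j,i+k}(M,P) + f^t_{k,i+j}(N,P) - 2 f^t_n(P) \bigr]
\]
is the lattice translation of Observation \ref{one direction}. Applying the preceding proposition term by term, one checks that the bracketed expression equals $\omega_i \cdot d(P,L) + \omega_j \cdot d(P,M) + \omega_k \cdot d(P,N)$: in the $SL_n$ case where $f_n^t$ vanishes, each $f^t_{i,j+k}(L,P)$ reduces to $\omega_{j+k} \cdot d(L,P) = \omega_i \cdot d(P,L)$ via $-w_0 \omega_i = \omega_{n-i}$, and in general the $-2 f^t_n(P)$ correction absorbs the $GL_n$ normalization. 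The bound then follows from $G(\cK)$-invariance of $f^t_{ijk}$ by translating $P$ to the standard lattice $E$.

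For the reverse inequality, my plan is to exhibit an explicit witness $P$. The natural candidate, suggested by the definition of $f^t_{ijk}$, is to choose optimal vectors $u_1,\ldots,u_i \in L$, $v_1,\ldots,v_j \in M$, $w_1,\ldots,w_k \in N$ with
\[
  -\val\det(u_1,\ldots,u_i,v_1,\ldots,v_j,w_1,\ldots,w_k) = f^t_{ijk}(L,M,N),
\]
and set $P$ to be the $\cO$-lattice they generate. Finiteness of the maximum forces these $n$ vectors to be $\cK$-linearly independent, so they form an $\cO$-basis of $P$ and $f^t_n(P) = f^t_{ijk}(L,M,N)$. The desired inequality reduces to
\[
  f^t_{i,j+k}(L,P) + f^t_{j,i+k}(M,P) + f^t_{k,i+j}(N,P) \leq 3\, f^t_{ijk}(L,M,N),
\]
and each of the three terms is already $\geq f^t_{ijk}(L,M,N)$ by substituting the optimal $u_\bullet, v_\bullet, w_\bullet$ appropriately, so the goal is equality for each.

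To bound, say, $f^t_{i,j+k}(L,P)$, one would expand arbitrary $y_1,\ldots,y_{j+k} \in P$ in the basis $u_\bullet, v_\bullet, w_\bullet$ and invoke multilinearity of $\det$ together with $\val(a+b) \geq \min(\val a, \val b)$. This reduces the problem to bounding ``pure'' determinants whose columns come from $L$ (the $i$ chosen vectors $x_1,\ldots,x_i$ together with $i'$ of the $u$'s), $j'$ of the $v$'s, and $k'$ of the $w$'s, where $i'+j'+k' = j+k$. Each such pure determinant has $-\val$ at most $f^t_{i+i',j',k'}(L,M,N)$ by the definition of the latter.

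The main obstacle is that $f^t_{i+i',j',k'}(L,M,N)$ is not \emph{a priori} bounded by $f^t_{ijk}(L,M,N)$ when $i' > 0$: redistributing indices between the three lattices may increase the maximum determinantal valuation. Controlling these ``cross-terms'' is precisely what the two special cases in the paper handle — by smallness the cross-contributions vanish, while in an apartment they linearize — and in both cases max-flow/min-cut enforces the cancellation. In full generality I expect either that the naive $P$ above must be replaced by a more carefully adapted lattice (perhaps its saturation with respect to some grading, so that cross-term selections become degenerate), or that one needs a genuine max-flow/min-cut duality that rebalances contributions among the three terms. This rebalancing step is where I anticipate the real difficulty.
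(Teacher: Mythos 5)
The statement you are attempting is \emph{Conjecture}~\ref{conj:main}; the paper itself does not prove it, but only establishes the two special cases of Theorem~\ref{thm:1row} (three ``close'' lattices) and Theorem~\ref{apartments} (lattices in a common apartment). You are right to flag your proposal as incomplete, and your treatment of the easy inequality is correct and matches Observation~\ref{one direction}.

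For the reverse inequality, however, the gap you identify is fatal to the proposed witness $P$, not merely an unfinished step. Taking $P$ to be the $\cO$-span of a determinant-optimal triple $u_\bullet, v_\bullet, w_\bullet$ does not work in general, because the cross-terms $f^t_{i+i',j',k'}(L,M,N)$ that appear when you expand a column of $P$ in this basis can genuinely exceed $f^t_{ijk}(L,M,N)$. Concretely, take $n=3$, $i=j=k=1$, $L = \langle t^{-10}e_1, t^{-10}e_2, t^{-10}e_3\rangle$, $M = N = \langle e_1,e_2,e_3\rangle$. Then $f^t_{111}(L,M,N) = 10$, and every optimal triple spans a lattice equivalent to $P_0 = \langle t^{-10}e_1, e_2, e_3\rangle$. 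But $f^t_{1,2}(L,P_0) = 20$ (pick $t^{-10}e_2 \in L$ together with $t^{-10}e_1, e_3 \in P_0$), $f^t_{1,2}(M,P_0) = f^t_{1,2}(N,P_0) = 10$, and $f^t_3(P_0) = 10$, so the right-hand side evaluated at $P_0$ is $20 + 10 + 10 - 20 = 20 \neq 10$. This is precisely the failure you anticipated: the cross-term with $(i+i',j',k') = (3,0,0)$ is $f^t_3(L) = 30 > 10$. The correct witness here, supplied by the proof of Theorem~\ref{apartments}, is $P = \langle e_1,e_2,e_3\rangle$ itself, which gives $10 + 0 + 0 - 0 = 10$.

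The common thread of the paper's two partial proofs is that $P$ must encode a certificate from the \emph{dual} side of the min--max: in the close-lattice case $P$ is read off from a minimum cut of a flow network built from the $D_4$-quiver decomposition of $(U_1,U_2,U_3)$, and in the apartment case $P$ is $\langle t^{-b_1}x_1,\ldots,t^{-b_n}x_n\rangle$ where the $b_j$ are dual potentials from the Kuhn--Munkres theorem. The span of the optimal primal vectors is the wrong object; if you pursue this, look instead for a lattice built from cut or dual-potential data. Your instinct that a max-flow/min-cut rebalancing is the crux of the matter is exactly right, but as written the proposal commits to a candidate $P$ that provably fails.
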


The inequality $\leq$ between the two sides is not difficult to prove (Observation~\ref{one direction}); hence the content of the conjecture is the existence of a lattice $P$ for which equality holds.

\subsection{The case of three ``close'' lattices}

Our first partial result is as follows:
\begin{thm} \label{thm:1row}
  Conjecture \ref{conj:main} holds when $E \subseteq L,M,N \subseteq t^{-1}E$. In particular, one of the choices
  \[
    tE, L, M, N, L+M, L+N, M+N, L+M+N
  \]
  works for $P$.
\end{thm}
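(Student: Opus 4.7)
The plan is to reduce the theorem to a combinatorial question on the $n$-dimensional $\C$-vector space $V := t^{-1}E/E$. Under the hypothesis $E \subseteq L, M, N \subseteq t^{-1}E$, each of the three lattices is determined by the subspace $\bar L := L/E \subseteq V$ (and analogously $\bar M, \bar N$), and moreover each of the eight candidate lattices for $P$ corresponds to one of the eight subspaces obtained by summing a subset of $\{\bar L, \bar M, \bar N\}$, with the empty sum $0$ matching $P = tE$.

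First I would derive a clean formula for the left-hand side. Because every vector in $L$ has valuation $\geq -1$, the $t^{-n}$-term of $\det(u_1,\ldots,u_i,v_1,\ldots,v_j,w_1,\ldots,w_k)$ is controlled by the images in $V$ of the chosen vectors, and a direct expansion gives
\[
  f^t_{ijk}(L,M,N) \;=\; \max\bigl\{\dim A + \dim B + \dim C : A \subseteq \bar L,\; B \subseteq \bar M,\; C \subseteq \bar N,\; \dim A \leq i,\; \dim B \leq j,\; \dim C \leq k,\; A + B + C \text{ direct}\bigr\}.
\]
Next, I would apply the binary formula of the proposition to each of the three terms $f^t_{i,j+k}(L,P)$, $f^t_{j,i+k}(M,P)$, $f^t_{k,i+j}(N,P)$ to rewrite the right-hand side in terms of $\dim\bar P$ and the intersections $\dim(\bar L\cap\bar P)$, $\dim(\bar M\cap\bar P)$, $\dim(\bar N\cap\bar P)$. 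A similar expansion shows that each such binary valuation is itself a maximum of $\dim A + \dim D$ subject to $\dim A \leq i$, $\dim D \leq j+k$, and a direct-sum constraint inside $\bar L + \bar P$, yielding a piecewise-linear formula.

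The crux is to show that for some $P$ in the list the RHS equals the LHS. My plan is to interpret the LHS as the value of a max-flow problem on an auxiliary three-source, one-sink network whose intermediate layer encodes $\bar L, \bar M, \bar N$ inside $V$, so that the $2^3$ natural vertex cuts---indexed by subsets $S \subseteq \{L, M, N\}$ declared to lie on the source side---correspond bijectively to the eight choices of $P$, with the capacity of each cut equal to the RHS at the corresponding $P = \sum_{X\in S} X$ (or $P = tE$ when $S = \emptyset$). The classical max-flow/min-cut theorem would then guarantee that one of these cuts is tight, producing the required $P$.

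The main obstacle will be rigorously matching the RHS formula at each candidate $P$ with the capacity of its associated cut. Because each binary valuation is a minimum of two piecewise-linear quantities, this matching decomposes into a finite case analysis organized by the relative positions of $\bar L, \bar M, \bar N$ in $V$ and the triple $(i, j, k)$. I expect max-flow/min-cut to be the unifying combinatorial principle that subsumes these cases, consistent with the paper's overall theme.
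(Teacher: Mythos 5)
Your high-level strategy---reduce to the finite-dimensional space $V = t^{-1}E/E$, reinterpret the left side as a maximum over linearly independent choices from $\bar L, \bar M, \bar N$, and then use a max-flow/min-cut argument with $2^3$ cuts matching the $8$ candidate lattices $P$---is in the same spirit as the paper's proof. However, there is a key missing ingredient: you never say what the intermediate layer of your network actually is. Choosing linearly independent vectors from subspaces over an infinite field is not \emph{a priori} a finite combinatorial problem, so there is no obvious discrete flow network whose value is the quantity in your formula for $f^t_{ijk}(L,M,N)$. The paper's solution is to observe that $(U_1, U_2, U_3) \hookrightarrow V$ is a representation of the quiver of type $D_4$, decompose it into its $9$ relevant indecomposables, and use the multiplicities of these indecomposables as the intermediate vertices. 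That decomposition (or its equivalent reformulation as a linearized K\"onig theorem, Theorem~\ref{thm:vsKonig}) is the crux of the argument, and without it the ``network'' is only a metaphor.

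There is also a more definite flaw: the claimed equality between each cut capacity and the RHS at the corresponding $P$ is false. For instance, take $M = N = E$ (so $U_2 = U_3 = 0$) and $\dim U_1 \leq \min(i+j,\, i+k)$. The cut in which $U_2, U_3$ are separated from the sink has capacity $j + k + \dim U_1$, but a direct computation gives the RHS at $P = L$ equal to $\dim U_1$, which is strictly smaller whenever $j + k > 0$. What is true, and what the argument actually needs, is the one-sided inequality: the RHS evaluated at the $P$ associated to a given cut is at most that cut's capacity. Combined with Observation~\ref{one direction} (which gives $f^t_{ijk}(L,M,N) \leq$ RHS at every $P$), this inequality at the \emph{minimal} cut squeezes the RHS at the corresponding $P$ to equality. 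The paper establishes the needed inequality by taking a maximum flow that saturates the minimal cut and reading off from it explicit choices of vectors bounding each binary term $f^t_{\bullet,\bullet}(\cdot, P)$ from above; you should replace your exact-matching case analysis with an argument of this form.
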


\begin{proof}
If $E \subseteq L \subseteq t^{-1}E$, then $L$ is determined by its projection $U_1$ to the space $\F^n \equiv t^{-1}E/E$. Likewise $M \simeq E \oplus U_2$ and $N \simeq E \oplus U_3$, where $U_1$, $U_2$, and $U_3$ are subspaces of $V = \F^n$. Now a system of three subspaces of an ambient space
\[
\xymatrix@1{
  & U_1\ar[d] & \\
  U_2\ar[r] & V & U_3\ar[l]
}
\]
forms a representation of the quiver {\tiny $\xymatrix@!0{
  & \bullet\ar[d] & \\
  \bullet\ar[r] & \bullet & \bullet\ar[l]
}$} of Dynkin type $D_4$ and thus can be expressed as a direct sum of its $12$ irreducible representations, of which $3$ are excluded since they correspond to non-injective maps. In the following diagram, we assign names to the remaining $9$ representation types and their basis vectors:
\begin{equation}
\begin{tabular}{c|ccccccccc}
  \multicolumn{1}{c}{Rep.} & $A$ & $A'$ & $A''$ & $B$ & $B'$ & $B''$ & $C$ & $D$ & $S$ \\ \hline
  $U_1$ & $a_\ell$ &&& $b_\ell$ & $b_\ell'$ && $c_\ell$ & $u_\ell$ \\
  $U_2$ && $a_\ell'$ && $b_\ell$ && $b_\ell''$ & $c_\ell$ & $v_\ell$ \\
  $U_3$ &&& $a_\ell''$ && $b_\ell'$ & $b_\ell''$ & $c_\ell$ & $u_\ell + v_\ell$ \\
  $V$ & $a_\ell$ & $a_\ell'$ & $a_\ell''$ & $b_\ell$ & $b_\ell'$ & $b_\ell''$ & $c_\ell$ & $u_\ell, v_\ell$ & $s_\ell$
\end{tabular}
\end{equation}
To compute $f_{ijk}(L,M,N)$, we must search for a choice of vectors $u_1, \dots, u_i \in L$, $v_1, \dots, v_j \in M$, $w_1, \dots, w_k \in N$ minimizing the valuation of the determinant in \eqref{def fijk}. There is no reason not to choose vectors in either $t^{-1}V$ or $V$, and then the valuation of the determinant depends only on the number of $t$'s involved, as long as the $n$ underlying vectors in $V$ are linearly independent. So we have the following interpretation.
\begin{lem} \label{lem:linind}
We have $f_{ijk}(L,M,N) = g$, where $g$ is the maximum number of linearly independent vectors that may be chosen from $U_1$, $U_2$, $U_3$, with the restriction that at most $i$ vectors from $U_1$, $j$ from $U_2$, $k$ from $U_3$ may be chosen.
\end{lem}
Moreover, we may limit our vectors to the bases of $U_1$, $U_2$, $U_3$ constructed above. Now $g$ depends in a purely combinatorial way on $i$, $j$, $k$, and the multiplicities of the irreducible representations.

We will now interpret $g$ as the maximum flow in a certain graph. Draw four layers of vertices as follows:
\begin{itemize}
  \item A single source vertex;
  \item One \emph{representation vertex} for each of the irreducible components of the representation of $D_4$ determined by $U_1$, $U_2$, $U_3$;
  \item Three \emph{$U$-vertices}, labeled $U_1$, $U_2$, and $U_3$;
  \item A single sink vertex.
\end{itemize}
Then draw arrows from each level to the next as follows:
\begin{itemize}
  \item Each representation vertex is joined to the source with an arrow whose capacity is the dimension of the portion of $V$ that it corresponds to (always $1$ except for the representation $D$, where it is $2$);
  \item Each representation vertex is joined to each $U$-vertex $U_m$ with an arrow whose capacity is the dimension of the portion of $U_m$ that it corresponds to (always $0$ or $1$); for future reference, the arrows with capacity $1$ are labeled with the appropriate basis vector of $U_i$.
  \item The three $U$-vertices $U_1$, $U_2$, and $U_3$ are joined to the sink with arrows of capacity $i$, $j$, and $k$, respectively.
\end{itemize}
Now it is readily verified that any choice of basis vectors satisfying the conditions of Lemma \ref{lem:linind} can be represented by a flow in this graph, where for each vector chosen, one unit of fluid flows from the source through the appropriate representation-to-$U$ edge and then out the sink. Hence the maximum flow is $g$.
We turn our attention to the cuts of the constructed graph. Note that once each of the $U_m$ is either cut from or ``soldered'' to the sink (the vertices may be identified once the decision is made not to cut the edge), the graph becomes a union of noninteracting subgraphs, one for each representation vertex. So the minimal cut in each of the eight cases is easily determined:
\begin{enumerate}
\item If all three $U_m$ are cut, no further cuts are necessary, and we obtain a cut of capacity $i+j+k=n$.
\item If two of the $U_m$ are cut, say $U_2$ and $U_3$, then the representation vertices with connections to $U_1$ (namely, those of type $A$, $B$, $B'$, $C$, and $D$) can be dealt with by cutting these connections, which are labeled by the basis vectors of $U_1$. Hence there are cuts of capacity $j+k+\dim U_1$ and, symmetrically, $i+k+\dim U_2$ and $i+j+\dim U_3$.
\item If only one $U_m$ is cut, say $U_3$, then we must further cut one unit for each representation vertex of type $A$, $A'$, $B$, $B'$, $B''$, or $C$, and two units for each representation vertex of type $D$. This amounts to one unit for each vector in a basis of $U_1 + U_2$. So we get cuts of capacity $k + \dim(U_1 + U_2)$ and, symmetrically, $j + \dim(U_1 + U_3)$ and $i + \dim(U_1 + U_2)$.
\item Finally, if none of the $U_m$ are cut, then there is no better option than cutting the inflow to each representation vertex, excepting those of type $S$, for which of course no cut is necessary. So we get a cut of capacity $\dim(U_1 + U_2 + U_3)$.
\end{enumerate}
So we have reached a second checkpoint in the computation of $f_{ijk}$:
\begin{lem}\label{lem:min}
$f_{ijk}(L,M,N) = \min\{i+j+k,j+k+\dim U_1, i+k+\dim U_2, i+j+\dim U_3, k + \dim(U_1 + U_2), j + \dim(U_1 + U_3), i + \dim(U_1 + U_2), \dim(U_1 + U_2 + U_3)\}$.
\end{lem}

Finally, we must relate the eight terms on the right-hand side of this lemma to the right-hand side of Conjecture \ref{conj:main} for the lattices $P$ listed in Theorem \ref{thm:1row}. The key is that in max-flow/min-cut, there is always a flow that uses every edge of the minimal cut at full capacity. We will use this flow to bound the $f_{\bullet,\bullet}$ terms from above, proving that the right-hand side is at most $f_{ijk}(L,M,N)$, since as previously remarked the reverse inequality is trivial (Observation \ref{one direction}).

\begin{enumerate}
\item If $i+j+k$ is the minimal cut, then there are $i$ vectors from $U_1$, $j$ vectors from $U_2$, and $k$ vectors from $U_3$, all linearly independent. Picking $P = E$, we find $f_{i,j+k}(L,E) \leq i$ by taking the $i$ linearly independent vectors from $L$ and filling out with vectors from $V$. Calculating the other two terms by symmetry, we get
\begin{align*}
  &f_{i,j+k}(L,tE) + f_{j,i+k}(M,tE) + f_{k,i+j}(N,tE) - 2f_n(tE) \\
  &\leq i + j + k  \\
  &= f_{i,j,k}(L,M,N).  
\end{align*}
\item If $j+k+\dim U_1$ is the minimal cut, then there are $\dim U_1 \leq i$ vectors from $U_1$, $j$ vectors from $U_2$, and $k$ vectors from $U_3$, all linearly independent. We will pick $P = L$. The most difficult term is $f_{j,i+k}(M,L)$ which can be bounded by picking the $j$ vectors from $U_2$ and the basis for $U_1$, filling out with vectors from $V$, getting a bound of $ j + \dim{U_1}$. The other terms are either symmetric or trivial, and we get
\begin{align*}
  &f_{i,j+k}(L,L) + f_{j,i+k}(M,L) + f_{k,i+j}(N,L) - 2f_n(L) \\
  &\leq  \dim U_1 + j+\dim U_1) + k + \dim U_1) - 2(\dim U_1) \\
  &= j + k + \dim U_1 = f_{i,j,k}(L,M,N).  
\end{align*}
\item If $k + \dim(U_1 + U_2)$ is the minimal cut, there is a basis for $U_1 + U_2$ consisting of at most $i$ vectors from $U_1$ and at most $j$ vectors from $U_2$, and also $k$ vectors chosen from $U_3$ that are linearly independent from $U_1 + U_2$. We pick $P = L+M$. For $f_{i,j+k}(L,L+M)$, it is possible to pick all the vectors in the basis of $U_1 + U_2$ (at most $i$ from $L$ and $j$ from $M$) before filling out with $V$. The term $f_{j,i+k}(M,L+M)$ is symmetric, while in $f_{k,i+j}(N,L+M)$ we can get the $k$ linearly independent vectors in $U_3$ as well as a basis of $L+M$. So in all, we get
\begin{align*}
  &f_{i,j+k}(L,L+M) + f_{j,i+k}(M,L+M) + f_{k,i+j}(N,L+M) - 2f_n(L+M) \\
  &\leq \dim (U_1+U_2) + \dim (U_1+U_2) + k + \dim (U_1+U_2)) - 2(\dim (U_1+U_2)) \\
  &= k + \dim (U_1+U_2) = f_{i,j,k}(L,M,N).
\end{align*}
\item Finally, if $\dim(U_1 + U_2 + U_3)$ is the minimal cut, then $U_1 + U_2 + U_3$ has a basis consisting of at most $i$ vectors from $U_1$, $j$ vectors from $U_2$, and $k$ vectors from $U_3$. This can be used to bound all the terms if we pick $P = L+M+N$:
\begin{align*}
  &f_{i,j+k}(L,L+M+N) + f_{j,i+k}(M,L+M+N) + f_{k,i+j}(N,L+M+N) - 2f_n(L+M+N) \\
  &\leq \dim (U_1+U_2+U_3) + \dim (U_1+U_2+U_3) + \dim (U_1+U_2+U_3) \\
  &\quad - 2(\dim (U_1+U_2+U_3)) \\
  &= \dim (U_1+U_2+U_3) = f_{i,j,k}(L,M,N).
\end{align*}
\end{enumerate}
Of course, the other $4$ possibilities for the minimal cut are symmetric.
\end{proof}

\subsubsection{Connection to Konig's theorem}

The portion of our proof of Theorem \ref{thm:1row} lying between Lemmas \ref{lem:linind} and \ref{lem:min} can be thought of as a combinatorial problem in linear algebra. It can be related to some familiar theorems in combinatorics in a way which we now describe.

\emph{Hall's theorem} or \emph{Hall's Marriage Lemma} is frequently described in terms of the following story: $n$ boys are to be married off to $m$ girls, and each boy-girl pair either likes or dislikes one another. A matching in which all the boys are paired exists if and only if no subset of the boys likes a strictly smaller subset of the girls. Or, in the inanimate language favored by mathematicians:
\begin{thm}[Hall]
If $S_1,S_2,\ldots,S_r$ are sets, then a system of distinct representatives of the $S_i$ (one from each set) exists if and only if for each subset $I \subseteq [r]$,
\[
  \left|\bigcup_{i \in I} S_i\right| \geq |I|.
\]
\end{thm}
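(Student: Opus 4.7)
The plan is to prove Hall's theorem via max-flow/min-cut, mirroring the technique used for Theorem \ref{thm:1row}, which fits the section's title. The necessity direction is immediate: if $x_i \in S_i$ are distinct representatives, then for any $I \subseteq [r]$ the set $\{x_i : i \in I\} \subseteq \bigcup_{i \in I} S_i$ has $|I|$ elements, forcing $|\bigcup_{i \in I} S_i| \geq |I|$. The real content is sufficiency.

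For sufficiency, I would build a directed graph as follows. Put a source $s$, a sink $t$, a vertex $b_i$ for each set $S_i$, and a vertex $g_x$ for each element $x \in \bigcup_i S_i$. Draw edges $s \to b_i$ of capacity $1$, edges $g_x \to t$ of capacity $1$, and edges $b_i \to g_x$ of capacity $1$ (or $\infty$) whenever $x \in S_i$. A system of distinct representatives is exactly an integral $s$-$t$ flow of value $r$: the edge $b_i \to g_{x_i}$ carrying one unit encodes that $x_i \in S_i$ is chosen as the representative of $S_i$, and the unit capacities on the $g_x \to t$ edges force the $x_i$ to be distinct. So it suffices to show that the max flow equals $r$, and by the max-flow/min-cut theorem this reduces to showing every $s$-$t$ cut has capacity at least $r$.

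Next I would analyze an arbitrary cut. Let $I \subseteq [r]$ index the boys $b_i$ on the sink side of the cut, and let $J$ index the girls $g_x$ on the source side. Then the edges cut are: the $s \to b_i$ edges for $i \in I$ (contributing $|I|$); the $g_x \to t$ edges for $x \in J$ (contributing $|J|$); and every $b_i \to g_x$ edge with $i \notin I$, $x \notin J$, $x \in S_i$. If we use capacity $\infty$ on the middle edges, we must have no such edge, i.e., $\bigcup_{i \notin I} S_i \subseteq J$. The cut capacity is then
\[
  |I| + |J| \;\geq\; |I| + \Bigl|\bigcup_{i \notin I} S_i\Bigr| \;\geq\; |I| + (r - |I|) \;=\; r,
\]
where the last inequality is exactly Hall's condition applied to the index set $[r] \setminus I$. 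Hence every cut has capacity $\geq r$, so max-flow equals $r$, and a system of distinct representatives exists.

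The only step that is not essentially bookkeeping is the translation of the cut-capacity calculation into Hall's condition; once one sets up the network carefully (especially the choice to make the internal $b_i \to g_x$ edges have large capacity, so that the minimizing cut never includes them), the rest is symbol-pushing. I expect no serious obstacle.
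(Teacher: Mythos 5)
Your proof is correct: the network is set up properly, the correspondence between systems of distinct representatives and integral flows of value $r$ is right, and the cut analysis (with the observation that a finite cut must absorb $\bigcup_{i \notin I} S_i$ into $J$, so Hall's condition applied to $[r]\setminus I$ gives capacity at least $r$) is exactly the standard argument. Note that the paper states Hall's theorem only as a classical background result and offers no proof of its own, so there is nothing to compare against; your max-flow/min-cut route is the standard one and, as it happens, matches the paper's explicit remark that K\"onig's and Moshonkin's theorems and Theorem \ref{thm:1row} all rest on max-flow/min-cut-type arguments.
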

A refinement of Hall's theorem is \emph{Konig's theorem.} Instead of giving conditions for a perfect matching to exist, it provides a formula for the maximum number of disjoint pairs that may be made:
\begin{thm}[Konig]
If $S_1,S_2,\ldots,S_r$ are sets, the maximum number of distinct representatives of the $S_i$ (at most one from each set) is
\[
  \min_{I \subseteq [r]} \left( \left| \sum_{i\in I} S_i \right| + r - |I| \right).
\]
\end{thm}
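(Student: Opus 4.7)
The plan is to prove Konig's theorem by a max-flow/min-cut argument, in direct analogy with the proof of Theorem \ref{thm:1row}. First I would construct the following flow network: introduce a source $s$ and sink $t$, add one \emph{set-vertex} for each $S_i$ and one \emph{element-vertex} for each $x \in \bigcup_i S_i$, then put directed edges $s \to S_i$ of capacity $1$, edges $x \to t$ of capacity $1$, and edges $S_i \to x$ of capacity $+\infty$ whenever $x \in S_i$.

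Next I would identify integer flows in this network with partial systems of distinct representatives: a flow of value $k$ uses $k$ source-edges and $k$ sink-edges, and flow conservation at the set-vertices assigns a distinct chosen element to each of $k$ chosen indices; conversely any partial system of distinct representatives of size $k$ is realized this way. Hence the maximum integer flow equals the maximum number of distinct representatives, and by the integrality of max flow in a network with integer capacities (the infinite edges never carry more than $1$ unit in any optimal integer flow) this equals the maximum of the real-valued max flow.

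Then I would analyze the minimum cut. A cut is determined by the source-side vertex set $A \ni s$; set $I = \{i : S_i \in A\}$. Because the edges $S_i \to x$ have infinite capacity, no finite cut contains them, which forces $\bigcup_{i \in I} S_i \subseteq A$. The finite-capacity edges crossing the cut are then the $r - |I|$ source-edges into set-vertices with $i \notin I$ together with the sink-edges out of element-vertices lying in $A$; in a minimum cut we may take the element-vertices in $A$ to be exactly $\bigcup_{i \in I} S_i$. The resulting capacity is $r - |I| + \bigl|\bigcup_{i \in I} S_i\bigr|$, and minimizing over $I$ recovers the right-hand side of the theorem. Applying max-flow/min-cut concludes.

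There is no serious obstacle here; the only subtle point is using the infinite-capacity trick to force a clean bijection between subsets $I \subseteq [r]$ and candidate minimum cuts. As an alternative route I would note that the inequality $\leq$ is immediate (for any $I$, the chosen representatives split into those indexed by $I$, giving at most $|\bigcup_{i\in I} S_i|$ distinct elements, and those outside, giving at most $r - |I|$), so one could also deduce the matching lower bound from Hall's theorem applied to a carefully chosen sub-family; but the max-flow/min-cut proof is cleaner and fits the spirit of the surrounding discussion.
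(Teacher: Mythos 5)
Your max-flow/min-cut argument is correct and self-contained. Be aware, though, that the paper does not prove this theorem at all: K\"onig's theorem is stated as known background, alongside Hall's theorem and Moshonkin's theorem, and the paper's original contribution in this subsection is the vector-space analogue (Theorem~\ref{thm:vsKonig}), which it derives from Moshonkin's theorem rather than by a flow argument. Your network is the standard bipartite SDR network and the cut analysis is sound; the one clean-up I would make is to say explicitly that $t\notin A$ and that, conversely, every $I\subseteq[r]$ yields a legal finite-capacity cut via $A=\{s\}\cup\{S_i:i\in I\}\cup\bigcup_{i\in I}S_i$, so the minimum over finite cuts really is the stated minimum over $I$. (You should also read the paper's $\left|\sum_{i\in I}S_i\right|$ as the cardinality of the union $\bigcup_{i\in I}S_i$, as you implicitly do, to match the notation in Hall's theorem two lines above.) It is worth noting that your sketched alternative --- obtain the lower bound from Hall's theorem applied to an augmented family --- is exactly the mechanism the paper uses to prove Theorem~\ref{thm:vsKonig} from Theorem~\ref{thm:Moshonkin}: there they adjoin an auxiliary summand $\F^{m-n}$ to each $V_i$ to absorb the defect, and the set-theoretic analogue is to adjoin a fixed pool of dummy elements common to every $S_i$ and then invoke Hall. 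So both of your routes are sound, and the flow argument you chose is explicitly in the spirit of the authors' remark that K\"onig's and Moshonkin's theorems, like their Theorem~\ref{thm:1row}, ``rely on max-flow/min-cut-type results.''
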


In \cite{M}, Theorem 2, Moshonkin ``linearized'' Hall's theorem in the sense of replacing sets by vector spaces and adjusting conditions accordingly. His result is:
\begin{thm}[Moshonkin] \label{thm:Moshonkin}
If $V_1,V_2,\ldots,V_r$ are subspaces of an ambient vector space $V$, then a system of linearly independent representatives of the $V_i$ exists if and only if for each subset $I \subseteq [r]$,
\[
  \dim \sum_{i \in I} S_i \geq |I|.
\]
\end{thm}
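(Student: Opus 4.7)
The plan is to adapt the classical inductive proof of Hall's theorem to the linear setting, replacing set unions by subspace sums and cardinalities by dimensions. The necessity direction is immediate: given linearly independent representatives $v_i \in V_i$, the vectors $\{v_i : i \in I\}$ lie in $\sum_{i \in I} V_i$ and are independent, so $\dim \sum_{i \in I} V_i \geq |I|$. For sufficiency, I induct on $r$; the base case $r=1$ is trivial, since $\dim V_1 \geq 1$ supplies a nonzero element of $V_1$.

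For the inductive step, I split on whether the dimension hypothesis holds \emph{strictly} on every nonempty proper subset of $[r]$. In the \emph{strict case}, where $\dim \sum_{i \in I} V_i \geq |I|+1$ for every nonempty proper $I \subseteq [r]$, pick any nonzero $v_1 \in V_1$ and pass to the quotient $V/\langle v_1 \rangle$. Letting $V_i'$ denote the image of $V_i$ for $i \geq 2$, each subspace sum loses at most one dimension in the quotient, so $\dim \sum_{i \in I} V_i' \geq |I|$ for every $I \subseteq \{2,\ldots,r\}$. The inductive hypothesis yields independent representatives $v_2',\ldots,v_r'$; arbitrary lifts $v_i \in V_i$ remain independent from $v_1$, because any linear relation among $v_1,\ldots,v_r$ would descend to a nontrivial relation among the $v_i'$ in the quotient.

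In the \emph{critical case}, there exists a nonempty proper $I \subseteq [r]$, which we may take to be $\{1,\ldots,s\}$ with $s<r$, such that $\dim(V_1+\cdots+V_s)=s$. Induction applied to $V_1,\ldots,V_s$ (whose hypothesis is inherited verbatim) yields a basis $v_1,\ldots,v_s$ of $W:=V_1+\cdots+V_s$. Now pass to $V/W$ and set $V_j':=(V_j+W)/W$ for $j>s$. For any $J\subseteq \{s+1,\ldots,r\}$, one computes $\dim \sum_{j\in J} V_j' = \dim\bigl(W+\sum_{j\in J} V_j\bigr)-s \geq (s+|J|)-s = |J|$, where the inequality uses the original hypothesis on the set $I\cup J$. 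Induction now supplies independent representatives modulo $W$; lifting them back to $V_{s+1},\ldots,V_r$ and adjoining $v_1,\ldots,v_s$ produces the desired linearly independent system, since any relation among the resulting vectors splits into one modulo $W$ (forcing the coefficients with $j>s$ to vanish) and one inside $W$ (forcing the rest to vanish by independence of $v_1,\ldots,v_s$). The one delicate point is the dimension bookkeeping in the critical case, which reduces precisely to invoking the Hall-type hypothesis for $I\cup J$, so the induction closes cleanly.
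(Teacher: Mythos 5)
The paper does not actually prove Moshonkin's theorem; it is cited from \cite{M}, Theorem~2, and used as a black box to derive the Konig-type result (Theorem~\ref{thm:vsKonig}), so there is no in-paper argument to compare against. Your proof is correct and self-contained: it transplants the classical Halmos--Vaughan two-case induction for Hall's marriage theorem into the linear setting, replacing set differences by quotient spaces and cardinalities by dimensions. The strict case is sound because passing to $V/\langle v_1\rangle$ loses at most one dimension in each subspace sum, which the strict hypothesis on nonempty proper subsets exactly absorbs; the critical case is sound because the bookkeeping identity $\dim\sum_{j\in J}V_j' = \dim\bigl(W+\sum_{j\in J}V_j\bigr)-\dim W$ reduces the needed bound to the original hypothesis on $\{1,\dots,s\}\cup J$, and the lift from $V_j'=(V_j+W)/W$ back to $V_j$ is legitimate since every coset of $W$ in $V_j+W$ meets $V_j$. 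Two small remarks: the paper's statement contains a typo ($S_i$ should read $V_i$), which you correctly interpreted; and it is worth saying explicitly in the critical case that the $s$ independent representatives you obtain from the subproblem automatically span $W$ because $\dim W = s$, which is what lets the final assembly of the two pieces go through.
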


In a similar vein, we would like to state and prove the following linearization of Konig's theorem:
\begin{thm} \label{thm:vsKonig}
If $V_1,\ldots,V_r$ are subspaces of an ambient space $V$, the maximum number of linearly independent representatives from different $V_i$'s is
  \begin{equation}\label{eq:vsKonig}
      \min_{I \subseteq [r]} \left[ \dim \left( \sum_{i\in I} V_i \right) + r - |I| \right].
  \end{equation}
\end{thm}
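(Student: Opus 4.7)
The plan is to establish the two inequalities separately. For $\leq$ in \eqref{eq:vsKonig}: given a linearly independent transversal $v_{i_1}, \ldots, v_{i_k}$ with distinct $i_s \in [r]$, and any subset $I \subseteq [r]$, the representatives with index in $I$ lie in $\sum_{i \in I} V_i$ and therefore contribute at most $\dim \sum_{i \in I} V_i$ linearly independent vectors, while those with index outside $I$ number at most $r - |I|$; thus $k \leq \dim \sum_{i \in I} V_i + r - |I|$.

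For the opposite inequality, I would mimic the classical derivation of König's theorem from Hall's theorem by adjoining slack dimensions and invoking Moshonkin's Theorem~\ref{thm:Moshonkin}. Let $m$ denote the right-hand side of \eqref{eq:vsKonig}, enlarge $V$ to $W = V \oplus \F^{r-m}$, and define $V_i' = V_i + \F^{r-m} \subseteq W$ for each $i$. The definition of $m$ gives $\dim \sum_{i \in I} V_i \geq m + |I| - r$ for every $I \subseteq [r]$, hence
\[
  \dim \sum_{i \in I} V_i' = \dim \sum_{i \in I} V_i + (r - m) \geq |I|,
\]
so Moshonkin's hypothesis is satisfied for $V_1', \ldots, V_r'$, producing linearly independent $v_1' \in V_1', \ldots, v_r' \in V_r'$.

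Writing $v_i' = v_i + w_i$ with $v_i \in V_i$ and $w_i \in \F^{r-m}$, the projection $p \colon W \to V$ restricted to $\mathrm{span}(v_1', \ldots, v_r')$ has kernel contained in $\F^{r-m}$, so of dimension at most $r - m$; consequently $\mathrm{span}(v_1, \ldots, v_r) = p(\mathrm{span}(v_i'))$ has dimension at least $r - (r - m) = m$. Selecting $m$ indices $i_1, \ldots, i_m$ for which $v_{i_1}, \ldots, v_{i_m}$ are linearly independent then furnishes a linearly independent transversal of size $m$, as required.

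The slack dimension $r - m$ is the unique choice that makes this argument work: any smaller and Moshonkin's hypothesis can fail for the worst subset $I$; any larger and the projected image in $V$ might be too small to contain $m$ independent vectors. Beyond picking this balance correctly, the argument is essentially mechanical — the substantive content is the one-line verification of Moshonkin's hypothesis — so I do not expect a serious obstacle. The only stylistic subtlety is that the extracted $v_i$ are genuine elements of $V_i$ (not linear combinations of representatives from several $V_j$'s), which the rank-nullity argument for $p$ delivers automatically, without any row or column operations on the $v_i'$.
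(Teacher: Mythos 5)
Your proof is correct, and it takes a genuinely more streamlined route than the one in the paper, though both rest on the same two ingredients (the padding trick and Moshonkin's theorem). The paper first identifies the minimizing subset $M \subseteq [r]$ and splits the construction in two: it builds a basis of $W = \sum_{i\in M} V_i$ out of representatives from distinct $V_i$, $i \in M$ (using the padding $V \mapsto V \oplus \F^{m-n}$ and Moshonkin), and separately finds representatives from $V_i$, $i \notin M$, that are linearly independent modulo $W$ (a second invocation of Moshonkin, applied to images in the quotient $V/W$). You instead pad the entire family at once by $\F^{r-m}$, apply Moshonkin globally in a single shot, and recover a size-$m$ transversal by a rank bound on the projection back to $V$. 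Your version avoids the quotient step and a second appeal to Moshonkin, and you never need to locate the minimizing set $M$ — only the minimum value $m$. What the paper's two-step version buys in exchange is a slightly sharper structural conclusion, implicit in its construction: the extremal transversal can always be taken to consist of all indices outside the minimizing $M$ together with a subset of $M$. Your observation that $r-m$ is the unique slack dimension making both halves of the argument go through is also correct and is a nice way to motivate the construction.
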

This result is applicable to our investigations in the following manner: if $U_1$, $U_2$, $U_3$ are subspaces, then the maximum number of linearly independent representatives from different terms of the multiset
\[
  \underbrace{U_1,\ldots,U_1}_{i}, \underbrace{U_2,\ldots,U_2}_{j}, \underbrace{U_3,\ldots,U_3}_{k}
\]
is the quantity $g$ in the statement of Lemma \ref{lem:linind}. On the other hand, the expression \eqref{eq:vsKonig} clearly can only reach its minimal value when the index set $I$ includes either all or none of each of the three strings of repeated $U_i$. Thus Theorem \ref{thm:vsKonig} provides an alternative proof of Lemma \ref{lem:min} from Lemma \ref{lem:linind}.

We now deduce Theorem \ref{thm:vsKonig} from Theorem \ref{thm:Moshonkin}. In fact, the two theorems are readily found to be equivalent.
\begin{proof}[Proof of Theorem \ref{thm:vsKonig}]
Let $M$ be the subset $I \subseteq [r]$ that minimizes \eqref{eq:vsKonig}. Denote 
\begin{align*}
  m &= |M|,\\
  K &= [r]\backslash M,\\
  k &= |K| = r-m,\\
  W &= \sum_{i\in M} V_i,\\
  n &= \dim W.
\end{align*}
We will construct the requisite $n+k$ linearly independent representatives in the following way: we will find a basis for $W$ whose elements come from distinct $V_i$, and we will supplement this basis with vectors in the $k$ spaces $V_i$, $i \in K$, which are all linearly independent in the quotient space $V/W$. The proof will apply the minimality condition to sets $I$ which are respectively subsets and supersets of $M$.

We begin with the second step. Let $\tilde{V}_i$ be the image of $V_i$ in $V/W$. For each $I \subseteq K$, we have the condition
\[
  \dim \left( \sum_{i\in M\cup I} V_i \right) + r - (m + |I|) \geq n + k
\]
which simplifies to
\[
  \dim \sum_{i\in I} \tilde{V}_i \geq |I|.
\]
So the $\tilde{V}_i$, $i \in K$ satisfy precisely the condition of Theorem \ref{thm:Moshonkin} and hence a system of linearly independent representatives exists.

The first step is only slightly trickier. For each $I \subseteq M$, we have
\[
  \dim \left( \sum_{i \in I} V_i \right) + r - |I| \geq n + k
\]
which simplifies to
\[
  \dim \sum_{i\in I} V_i \geq |I| \geq n - m + |I|.
\]
This would be the condition of Theorem \ref{thm:Moshonkin} were it not for the summand $n-m$. So we use a trick. Plugging $I = \emptyset$, we see that $m\geq n$. Let $V' = V \oplus \F^{m-n}$ and $V'_i = V_i \oplus \F^{m-n}$. Then the $V'_i$, $i \in M$, satisfy the conditions of Theorem \ref{thm:Moshonkin} and we can find a basis of $V'$ with one vector from each $V'_i$. Projecting down to $V$, we have a spanning set, of which some $n$ vectors form a basis.
\end{proof}

Both Konig's and Moshonkin's theorems as well as our proof of Theorem \ref{thm:1row} rely on max-flow/min-cut-type results. We believe that this is not accidental, and that in general, defining intersection pairings between higher laminations will involve proving statements about affine buildings that have the flavor of max-flow/min-cut.

\subsection{Apartments}

Let $X = \{x_1, \ldots, x_n\}$ be a basis for $\cK^n$ over $\cK$. For any choice integers $c_1, \dots, c_n$, the set of lattices of the form $t^{c_j} x_j$, $j = 1,2,\ldots,n$ form a subset of the affind building called an apartment.

Our second result is a generalization of our conjecture in the situation where $L_1,\ldots,L_k$ all lie in the same apartment:

\begin{thm}\label{apartments}
If $L_1,\ldots,L_k$ lie in the same apartment and $i_1,\ldots,i_k$ nonnegative integers with sum $n$, then there exists a lattice $P$ such that
\[
  f^t_{i_1,\ldots,i_k}(L_1,\ldots,L_k) = \sum_j f^t_{i_j,n-i_j}(L_j,P) - (n-1)f^t_n(P).
\]
\end{thm}
(\emph{Remark:} We expect that the above generalization holds for general lattices $L_1,\ldots,L_k$.)
\begin{proof}
Use the following combinatorial result:
\begin{thm}[Kuhn and Munkres]
Let $[c_{ij}]$ be a real $n\times n$ matrix. Then the maximal sum of a transversal of $[c_{ij}]$ equals the minimal sum $\sum_i a_i + \sum_j b_j$ where the $a_i$ and $b_j$ satisfy $a_i + b_j \geq c_{ij}$ for all $i,j$. If the $c_{ij}$ are integers, the $a$'s and $b$'s can be taken integral as well.
\end{thm}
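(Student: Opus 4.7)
Fix a basis $X = \{x_1, \ldots, x_n\}$ of $\cK^n$ for the common apartment, so that each $L_\ell = \langle t^{c_{\ell,1}} x_1, \ldots, t^{c_{\ell,n}} x_n\rangle$ for integers $c_{\ell, j}$. The first step is to reduce $f^t_{i_1, \ldots, i_k}(L_1, \ldots, L_k)$ to a combinatorial assignment problem. Expanding the determinant of any admissible $n$-tuple of vectors in the $x$-basis, one sees that the minimal valuation is realised when each vector is ``pure,'' of the form $t^{c_{\ell, j}} x_j$, and the underlying indices $j$ bijectively cover $\{1, \ldots, n\}$. Thus, up to the additive constant $-\val(\det X)$,
\[
  f^t_{i_1, \ldots, i_k}(L_1, \ldots, L_k) \;=\; -\min \sum_\ell \sum_{j \in S_\ell} c_{\ell, j},
\]
where the minimum is over partitions $\{1, \ldots, n\} = \bigsqcup_\ell S_\ell$ with $|S_\ell| = i_\ell$.

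Next, apply Kuhn--Munkres to this assignment problem, using the $n \times n$ cost matrix obtained by repeating the row $(-c_{\ell, 1}, \ldots, -c_{\ell, n})$ with multiplicity $i_\ell$ for each lattice. By the symmetry of the repeated rows, we may take the dual variables to have the form $\alpha_\ell$ (constant across copies of row $\ell$) and $\beta_j$, with $\alpha_\ell + \beta_j \geq -c_{\ell, j}$ for all $\ell, j$ and $\sum_\ell i_\ell \alpha_\ell + \sum_j \beta_j$ attaining the max-transversal sum. The candidate is then $P := \langle t^{\beta_j} x_j \rangle_{j=1}^n$, which lies in the same apartment. Applying the same pure-basis reduction to each binary factor, one computes $f^t_{i_\ell, n - i_\ell}(L_\ell, P) = T_\ell(\phi) - \sum_j \beta_j$ (again up to a common $\val(\det X)$-constant), where $\phi_{\ell, j} := \beta_j - c_{\ell, j}$ and $T_\ell(\phi)$ denotes the sum of the top $i_\ell$ entries of $\phi_{\ell, \cdot}$.

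Substituting into the right-hand side of the theorem and letting $-C^*$ denote the minimum partition sum from the first step, the required identity reduces to an equality of the form $\sum_\ell T_\ell(\phi) = -C^* - (n-1-k) \sum_j \beta_j$ (modulo $\val(\det X)$). The inequality ``$\leq$'' follows routinely by specializing to $P = E$ via $G(\cK)$-invariance, so the content is to produce an integer $\beta$ for which equality is attained. I anticipate this follows from complementary slackness at the Kuhn--Munkres optimum, which gives $\alpha_\ell + \beta_j = -c_{\ell, j}$ for $(\ell, j)$ in an optimal primal partition $S^*$, together with the swap inequality for optimal assignments; these enforce the pairwise consistency bounds $c_{\ell, j} - c_{\ell, j'} \leq \beta_j - \beta_{j'} \leq c_{\ell', j} - c_{\ell', j'}$ (for $j \in S^*_\ell, j' \in S^*_{\ell'}$) required for the greedy top-$i_\ell$ choices to align with $S^*$. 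The main obstacle will be making this selection explicit: one must exploit the freedom within the Kuhn--Munkres dual polytope and break ties among equal $\phi$-values carefully to land on an integer $\beta$ that works, an argument I expect to have the flavor of max-flow/min-cut in keeping with the previous subsection.
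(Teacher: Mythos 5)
Your proposal does not prove the statement in question. The statement to be proved is the Kuhn--Munkres duality theorem itself: for a real $n\times n$ matrix $[c_{ij}]$, the maximal transversal sum equals the minimum of $\sum_i a_i + \sum_j b_j$ over potentials satisfying $a_i + b_j \geq c_{ij}$, together with the integrality claim when the $c_{ij}$ are integers. Your writeup instead sketches a proof of Theorem~\ref{apartments} (the apartment case), and in the middle you explicitly ``apply Kuhn--Munkres to this assignment problem,'' i.e.\ you invoke as a black box exactly the combinatorial statement you were asked to establish. Relative to the assigned statement this is circular: nothing in the proposal explains why the maximal transversal sum coincides with the minimal potential sum, nor why optimal integral potentials exist for integral data. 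A genuine proof would go through, for instance, the Hungarian algorithm with augmenting paths, or LP duality for the assignment polytope combined with total unimodularity (or Birkhoff--von Neumann) for integrality, or a deduction from K\"onig's theorem; none of this appears. (In the paper this theorem is simply quoted as a known result inside the proof of Theorem~\ref{apartments}, so the task here was the classical combinatorial argument, not its application to lattices.)

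Even read as an attempt at Theorem~\ref{apartments}, the sketch is incomplete and more complicated than necessary: the decisive step --- choosing and tie-breaking the dual potentials so that the greedy top-$i_\ell$ selections align with an optimal partition --- is only ``anticipated,'' not carried out. The paper avoids the repeated-row bookkeeping entirely by first reducing to $k=n$ with all $i_j=1$, taking $P=\langle t^{-b_1}x_1,\ldots,t^{-b_n}x_n\rangle$ from the Kuhn--Munkres potentials, and computing $f^t_{1,n-1}(L_i,P)=\sum_j b_j+a_i$ directly via complementary slackness, which finishes the argument immediately.
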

In our situation, we can first reduce to the case that $k=n$ and all $i_j$ are $1$. Then write $L_i = \<t^{-c_i1} x_1,\ldots, t^{-c_in} x_n\>$. Using the multilinearity of the determinant, we can assume that the value of $f_{i_1,\ldots,i_k}$ is attained by taking the valuation of the determinant of generators $t^{-c_ij} x_j$ of the respective lattices. Since all the $i$'s must be distinct as must the $j$'s, the valuation of the determinant is a transversal $-\sum_i c_{i\sigma(i)}$ for some $\sigma \in S_n$. Therefore $v_{1\cdots1}(L_1,\ldots,L_n)$ is the maximal transversal sum in the matrix $[c_{ij}]$.

By Kuhn-Munkres, there exist integers $a_i$ and $b_j$ such that $a_i + b_j \geq c_{ij}$ and
\[
  f^t_{1\cdots1}(L_1,\ldots,L_n) = \sum_i a_i + \sum_j b_j.
\]
Choose the lattice
\[
  P = \<t^{-b_1} x_1, \ldots, t^{-b_n} x_n \>.
\]
Obviously, equality $a_i + b_j = c_{ij}$ must hold whenever $c_{ij}$ belongs to the winning transversal. So
\[
  f^t_{1,n-1}(L_i,P) = \max_k \Big(c_{ik} + \sum_{j \neq k} b_j \Big)
  = \sum_j b_j + \max_k(c_{ik} - b_k) = \sum_j b_j + a_i.
\]
Since $f^t_n(P) = \sum_{j} b_j$, we have the desired equality.
\end{proof}

\section{Generalizations}

Theorem~\ref{apartments} hints at some generalizations of conjectures. It is known that the functions $f_{i_1,\ldots,i_k}$ are cluster co-ordinates for $k=2, 3, 4$, so that the corresponding functions $f^t_{i_1,\ldots,i_k}$ also give intersection pairings between higher laminations. In fact, we expect that this is true for all $k$.

Thus, in general, we conjecture that
$$f^t_{i_1,\ldots,i_k}(L_1,\ldots,L_k) = \sum_j f^t_{i_j,n-i_j}(L_i,P) - (k-1)f^t_n(P).$$
Note that if $L_1, \dots L_k$ lie in the same apartment, then Theorem~\ref{apartments} gives the above result. This gives one interpretation of the functions $f^t_{i_1,\ldots,i_k}$ in terms of the geometry of the affine building.

Let us another interpretation when $k=4$. We would like to calculate. We conjecture $f^t_{i_1, i_2, i_3, i_4}(L_1, L_2, L_3, L_4)$ is also given by the minimum of the two following expressions:

$$f^t_{i_4, n-i_4}(L_4, P) + f^t_{i_1, n-i_1}(L_1, P) + f^t_{i_4+i_1, i_2+i_3}(P, Q) + f^t_{i_2, n-i_2}(L_2, Q) + f^t_{i_3, n-i_3}(L_3, Q) - 2f^t_n(P) - 2f^t_n(Q)$$
where $P$ and $Q$ range over all lattices. If $P$ and $Q$ are normalized to have determinant $1$ (possibly by introducing fractional powers of $t$), we get the minimum over $P$ and $Q$ of
$$f^t_{i_1, n-i_1}(L_1, P) + f^t_{i_2, n-i_2}(L_2, P) + f^t_{i_1+i_2, i_3+i_4}(P, Q) + f^t_{i_3, n-i_3}(L_3, Q) + f^t_{i_4, n-i_4}(L_4, Q)$$
and
$$f^t_{i_4, n-i_4}(L_4, P) + f^t_{i_1, n-i_1}(L_1, P) + f^t_{i_4+i_1, i_2+i_3}(P, Q) + f^t_{i_2, n-i_2}(L_2, Q) + f^t_{i_3, n-i_3}(L_3, Q).$$
If $P=Q$, these expressions reduce to the previous expression
$$f^t_{i_1, n-i_1}(L_1, P) + f^t_{i_2, n-i_2}(L_2, P) + f^t_{i_3, n-i_3}(L_3, P) + f^t_{i_4, n-i_4}(L_4, P) - 3f^t_n(P).$$
The content of our conjecture is that allowing $P \neq Q$ does not change the minimum.

Equivalently, it is given by the minimum over $P$ and $Q$ of (again assuming that $P$ and $Q$ are normalized to have determinant $0$)
$$d(P,L_1) \cdot \omega_{i_1} + d(P, L_2) \cdot \omega_{i_2} + d(P, Q)  \cdot \omega_{i_3+i_4}  + d(Q, L_3) \cdot \omega_{i_3}  + d(Q, L_4) \cdot \omega_{i_4}$$
and
$$d(P,L_4) \cdot \omega_{i_4} + d(P, L_1) \cdot \omega_{i_1} + d(P, Q)  \cdot \omega_{i_2+i_3}  + d(Q, L_2) \cdot \omega_{i_2}  + d(Q, L_3) \cdot \omega_{i_3}.$$

Thus we conjecture that $f^t_{i_1, i_2, i_3, i_4}(L_1, L_2, L_3, L_4)$ is calculated by the minimum distance of a weighted network connecting the $L_i$. However, this network can take one of two shapes (three if one counts the degenerate case $P=Q$ separately). One recognizes that these two different networks, along with the weights along the networks, are identical to the spin networks that calculate the untropicalized function $f_{i_1, i_2, i_3, i_4}$.

We believe that in many other more general cases, the functions corresponding to points of $\X_{\vG,S}(\Zt)$ are calculated by some set equivalent spin networks which calculate the same function, and that the associated tropical functions are calculated by the minimal distance over these various weighted networks inside the affine building. For example, when $G=SL_4$ there are two inequivalent spin networks with four leaves with weights $\omega_2, \omega_1, \omega_2, \omega_3$ in that cyclic order. We believe that the corresponding tropical functions are given by the minimums of
$$f^t_{2,2}(L_1, P) + f^t_{1,3}(L_2, P) + f^t_{3, 1}(P, Q) + f^t_{2,2}(L_3, Q) + f^t_{3, 1}(L_4, Q) - 2f^t_n(P) - f^t_n(Q)$$
and
$$f^t_{3,1}(L_4, P) + f^t_{2,2}(L_1, P) + f^t_{1, 3}(P, Q) + f^t_{1,3}(L_2, Q) + f^t_{2, 2}(L_3, Q) - f^t_n(P) - 2f^t_n(Q)$$
respectively. This points towards a general geometric interpretation of intersection pairings between higher laminations.

%harmonic maps/convexity

\bibliographystyle{amsalpha}

\end{document}